\newfont{\bb}{msbm10 at 12pt}
\def\t{\hbox{\bb T}}
\def\r{\hbox{\bb R}}
\def\h{\hbox{\bb H}}
\def\zz{\hbox{\bb Z}}
\def\s{\hbox{\bb S}}
\def\amb{\mathcal{M}}
\def\man{\mathcal{N}}
\newcommand{\campo}{\mathfrak{X}}
\newcommand{\camb}{\overline{\nabla}}
\newcommand{\norm}[1]{\left\Vert #1 \right\Vert}
\newcommand{\abs}[1]{\left\vert #1 \right\vert}
\newcommand{\set}[1]{\left\{#1\right\}}
\newcommand{\meta}[2]{\langle #1,#2 \rangle }
\numberwithin{equation} {section}
\begin{document}

\theoremstyle{plain}\newtheorem{lemma}{Lemma}[section]
\theoremstyle{plain}\newtheorem{definition}{Definition}[section]
\theoremstyle{plain}\newtheorem{theorem}{Theorem}[section]
\theoremstyle{plain}\newtheorem{proposition}{Proposition}[section]
\theoremstyle{plain}\newtheorem{remark}{Remark}[section]
\theoremstyle{plain}\newtheorem{corollary}{Corollary}[section]

\begin{center}
\rule{15cm}{1.5pt} \vspace{.6cm}

{\Large \bf Area estimates and rigidity of capillary\\[3mm] $H-$surfaces in three-manifolds with boundary} \vspace{0.4cm}

\vspace{0.5cm}

{\large Jos$\acute{\text{e}}$ M. Espinar$\mbox{}^\dag$ and Harold Rosenberg$\mbox{}^\ddag$}\\
\vspace{0.3cm} \rule{15cm}{1.5pt}
\end{center}

\vspace{.5cm}

\noindent $\mbox{}^\dag$ Instituto de Matematica Pura y Aplicada, 110 Estrada Dona
Castorina, Rio de Janeiro 22460-320, Brazil; e-mail: jespinar@impa.br

\noindent $\mbox{}^\ddag$ Instituto de Matematica Pura y Aplicada, 110 Estrada Dona
Castorina, Rio de Janeiro 22460-320, Brazil; e-mail: rosen@impa.br

\vspace{.3cm}

\begin{abstract}
We obtain a bound for the area of a capillary $H-$surface in a three-manifold with umbilic boundary and controlled sectional curvature. We then analyze the geometry when this area bound is realized, and obtain rigidity theorems. As a side product, we obtain existence of totally geodesic embedded surfaces in hyperbolic three-manifolds under the assumption of the existence of a $H-$surface realizing the area bound, in particular, under the existence of a totally umbilic $H-$surface.  
\end{abstract}

{\bf Key Words:} Capillary surfaces; Rigidity of three-manifold with boundary; hyperbolic manifold with boundary.

\section{Introduction}

In a recent paper, L. Mazet and H. Rosenberg \cite{LMazHRos} showed that a minimal two sphere $\Sigma$ satisfies a lower area bound when immersed in a Riemannian manifold of sectional curvatures bounded between $0$ and $1$. Moreover, in the case that equality is achieved, they were able to characterize the Riemannian manifold and the two sphere as either the standard three sphere $\s ^3$ with $\Sigma$ totally geodesic or a quotient of the product manifold $\s ^2 \times \r $ endowed with the standard product metric with $\Sigma$ and $\s^2 \times \set{t}$. These results were motivated by Klingerberg's length estimate for closed geodesics in Riemannian surfaces of curvature between $0$ and $1$, and Calabi's characterization theorem when the equality is achieved (cf. \cite{WKli}). The authors \cite{LMazHRos} also proved a rigidity result for hyperbolic cusps. 

In this note, we will extend the area estimate and rigidity results to capillary surfaces of constant mean curvature immersed in a Riemnnian manifold with umbilic boundary and controlled sectional curvatures. In the past few years, other area estimates have been established under different hypothesis on the surfaces, for example free boundary surfaces, or in the Riemannian manifolds, for example a bound on the scalar curvature. We invite the reader to see the papers of L. Ambrozio \cite{LAmb}, S. Brendle \cite{SBre}, A. Fraser and R. Schoen \cite{AFraRSch} and F. Marques and A. Neves \cite{FMarANev}.

\section{Notation}

First, we will fix the notation we will use along the paper. 

Let $(\amb  , \partial \amb )$ be a complete Riemannian $3-$manifold with boundary and Riemannian metric $\meta{\cdot}{\cdot}$. Denote by $\camb$ the Levi-Civita connection associated to $\meta{\cdot}{\cdot}$ in $\amb $, $K_{sect}$ its sectional curvatures and ${\rm Ric}$ the Ricci tensor.

Let $\eta$ be the inward unit normal along $\partial \amb$. We say that $\partial \amb$ is {\it umbilic}, with {\it umbilicity factor} $\alpha \in \r$, if $II_{\partial \amb} = \alpha \, I _{\partial \amb}$, where $I_{\partial \amb}$ and $II_{\partial \amb}$ are the first and second fundamental forms of $\partial \amb$ in $\amb$. Here, the second fundamental form is computed with respect to the inward orientation $\eta$, i.e., 
$$ II_{\partial \amb} (X,Y) = -\meta{\camb _X \eta}{Y} , \, \, X,Y \in \campo (\partial \amb) ,$$in particular, $\partial \amb$ is totally geodesic if $\alpha = 0$. Along this paper, $\eta$ will always stand by the inward normal along $\partial \amb$.

Now, let $\Sigma $ be an oriented compact surface with boundary $\partial \Sigma$ and unit normal $N$, $N$ chosen so that $\norm{\vec{H}} N = \vec{H}$ when $\vec{H}\neq 0$; $\vec{H}=2HN$, where $\vec{H}$ and $H$ are the mean curvature vector and mean curvature function respectively. If $H$ is constant along $\Sigma$, we say that $\Sigma$ is a {\it $H-$surface}.

Also, denote by $II_\Sigma$ the second fundamental form of $\Sigma$ in $\amb $ with respect to $N$ and by $K_e$ and $K_\Sigma$ its extrinsic and Gaussian curvature, the extrinsic curvature $K_e$ is nothing but the product of the principal curvatures. Associated to the mean and extrinsic curvatures one can define the {\it skew curvature} as $\Phi = \sqrt{H^2 -K_e}$ that measures how far the surface is from being umbilic. Throughout this paper, we will denote by $\abs{\Sigma}$ and $\abs{\partial \Sigma}$ the area of $\Sigma$ and the length of $\partial \Sigma$ respectively.

We assume that $\Sigma \subset \amb$ and $\partial \Sigma \subset \partial \amb $. We say that $\Sigma $ is a {\it capillary surface of angle $\beta$} in $\amb$ if the outer conormal $\nu$ along $\partial \Sigma$ and the unit normal along $\partial \amb$ make a constant angle $\beta $ along $\partial \Sigma$, i.e., there exists a constant $\beta \in [0, \pi/2 )$ so that $\meta{\nu}{\eta} = -\cos \beta $. In particular, when $\beta =0$ or, equivalently, $\Sigma $ meets orthogonally $\partial \amb$, we say that $\Sigma$ is a {\it free boundary surface}.

\section{Outline of the paper: results and examples}

We obtain a lower bound for the area of a capillary $H-$disk immersed in a Riemannian $3-$manifold of sectional curvatures between $0$ and $1$. We prove

\begin{quote}
{\bf Lemma \ref{Lem:Area}.} {\it Let $(\amb,\partial \amb) $ be a complete orientable Riemannian $3-$manifold with boundary. Let $\Sigma \subset \amb$ be a compact oriented $H-$disk ($H\geq 0$) with boundary $\partial \Sigma \subset \partial\amb$. Assume that 
\begin{itemize} 
\item The sectional curvatures of $\amb $ satisfy $ K_{sect} \leq 1 $,
\item  $\partial \amb $ is umbilic, with umbilicity factor $\alpha \in \r $,
\item $\Sigma$ is a capillary disk of angle $\beta \in [0, \pi /2)$,
\end{itemize}then, 
\begin{equation*}
2\pi  \leq (1+H^2)\abs{\Sigma} + \frac{\alpha+ (H+{\rm max}_{\partial \Sigma}\Phi)\sin\beta }{\cos \beta} \abs{\partial \Sigma} .
\end{equation*}

Moreover, equality holds if, and only if, $\Sigma$ is umbilic, $K_\Sigma = 1 + H^2$ and $K_{sect} \equiv 1$ along $\Sigma$.}
\end{quote}

When $\Sigma$ is a free boundary disk and $\partial \amb $ is totally geodesic, the above inequality reads as 
$$ 2\pi  \leq (1+H^2)\abs{\Sigma}  $$and equality holds if, and only if, $\Sigma$ is umbilic, $K_\Sigma = 1 + H^2$ and $K_{sect} \equiv 1$ along $\Sigma$.

Now, let us describe the model cases of Riemannian manifolds $(\amb , \partial \amb)$ where free boundary disks achieve the equality. In this case we have two different models:

\begin{itemize}
\item {\bf Model 1:} Let $\s ^3 \subset \r ^4$ be the standard unit three-sphere embedded in the four dimensional Euclidean Space with the standard Euclidean metric $\meta{\cdot}{\cdot}_0$. Then, the upper hemisphere, given by
$$ \s ^3 _+ := \set{ (x_1,x_2,x_3,x_4) \in \s ^3 \, : \, x_4 \geq 0} ,$$is a complete manifold with constant sectional curvatures equal to $1$ and totally geodesic boundary 
$$ \partial \s ^3 _+ =\set{ (x_1,x_2,x_3,x_4) \in \s ^3 \, : \, x_4 = 0} ,$$which is isometric to a two-sphere $\s ^2$. From now on, we denote by $\s ^n (r)$ the standard $n-$dimensional sphere of constant sectional curvatures $1/r^2$.

For any $x \in\partial \s ^3 _+ $, let $\mathcal B _x(R)$ be the geodesic ball of the standard three-sphere $\s ^3$ centered at $x$ of radius $R$. Fix $ H \geq 0  $ a constant and set $R_H = \frac{\pi}{2}-{\rm arctg} \, H$. Then, we have 
$$ D_H := \partial \mathcal B_x (R_H) \cap \s ^3 _+ $$is a umbilic $H-$disk orthogonal to $\partial \s ^3 _+$ such that 
$$ \abs{D_H} = \frac{2\pi}{1+H^2}. $$

\item {\bf Model 2:} Let $\s ^2 _+$ be the upper hemisphere of the standard $2-$sphere, i.e., 
$$ \s ^2 _+ := \set{ (x_1,x_2,x_3) \in \s ^2 \, : \, x_3 \geq 0} .$$

Clearly, $\r \times \s^2 _+ $ endowed with the standard product metric is a complete manifold with sectional curvatures between $0$ and $1$ and totally geodesic boundary 
$$ \partial \left( \r \times \s^2 _+\right) = \r \times \partial \s^2 _+ .$$

For any $t \in \r$, $D_t = \set{t} \times \s ^2 _+$ is a totally geodesic minimal disk orthogonal to $\r \times \partial \s^2 _+$ such that 
$$ \abs{D_t} = 2\pi .$$
\end{itemize}

The conclusion of the rigidity result says that, under the existence of a free boundary $H-$disk achieving the equality, the manifold $\amb $ must be locally isometric to one of the models above. 

\begin{quote}
{\bf Theorem \ref{Th:Rigidity}.} {\it Let $(\amb,\partial \amb) $ be a complete orientable Riemannian $3-$manifold with boundary. Assume that $\amb$ has sectional curvatures $0 \leq K_{sect} \leq 1$ and $\partial \amb$ is connected and totally geodesic. If there exists a compact oriented embedded $H-$disk $\Sigma \subset \amb$ orthogonal to $\partial \amb$ such that 
$$ \abs{\Sigma} = \frac{2\pi}{1+H^2} .$$

Then: 
\begin{itemize}
\item If $H>0$, the mean convex side of $\amb \setminus \Sigma$, call this component $\mathcal U$, is isometric to $\mathcal B _x (R_H) \cap \s ^3 _+ \subset \s ^3$ with the standard metric, $R_H = \frac{\pi}{2}-{\rm arctg} \, H$ and $x \in \partial \s ^3_+$. Moreover, $\Sigma$ is a disk $D_H$ described in the Model 1.

\item If $H=0$, $\amb$ is isometric either to $\s ^3 _+$ with its standard metric of constant sectional curvature one, or a quotient of $\r \times \s ^2 _+$ with the standard product metric. Moreover, $\Sigma$ is a disk $D_H$ or $D_t$ as described in the Models 1 and 2.
\end{itemize}}
\end{quote}



Section \ref{SectNeg} is devoted to capillary $H-$surfaces immersed in a complete manifold with sectional curvatures less or equal to $-1$ and umbilic boundary. We first obtain an area bound.

\begin{quote}
{\bf Lemma \ref{Lem:Area2}.} {\it Let $(\amb,\partial \amb) $ be a complete orientable Riemannian $3-$manifold with boundary. Let $\Sigma \subset \amb$ be a compact oriented $H-$surface ($H\geq 0$) with boundary $\partial \Sigma \subset \partial\amb$. Assume that 
\begin{itemize} 
\item The sectional curvatures of $\amb $ satisfy $ K_{sect} \leq -1 $,
\item  $\partial \amb $ is umbilic, with umbilicity factor $\alpha \in \r $,
\item $\Sigma$ is a capillary surface of angle $\beta _i \in [0,\pi/2)$ at each connected component $\partial \Sigma _i$ of the boundary $\partial \Sigma$,
\end{itemize}then, 
\begin{equation*}
-2\pi \chi (\Sigma) + \sum_{i=1}^k\frac{\alpha+ \sin\beta _i (H+{\rm max}_{\partial \Sigma}\Phi))}{\cos \beta _i} \abs{\partial \Sigma _i}\geq ( 1- H^2 )\abs{\Sigma} ,
\end{equation*}where $k$ is the number of connected components of $\partial\Sigma$. Moreover, equality holds if, and only if, $\Sigma$ is umbilic, $K_\Sigma =  H^2-1$ and $K_{sect} \equiv -1$ along $\Sigma$.}
\end{quote}

As we did in Section \ref{SectPos}, we will describe the model cases when the equality is achieved by a free boundary $H-$surface, $H \in [0, 1]$, in a complete manifold with $K_{sect} \leq -1$ and totally geodesic boundary. 

In both cases, $H=1$ and $H \in [0,1)$, $\amb$ will be a hyperbolic manifold with totally geodesic boundary and $\Sigma$ will be a constant intrinsic curvature umbilical $H-$surface orthogonal to $\partial \amb$.

\begin{itemize}
\item {\bf Model 3:} We begin with the case $H \in [0,1)$. Let $S$ be a closed oriented surface of curvature $-1$. $S$ is a quotient of $\h ^2$ by a cocompact Fuchsian group $\Gamma$ of isometries of $\h^2$. Consider $P_0 \equiv \h ^2$ as isometrically embbeded in $\h ^3$ as a totally geodesic plane, with $n$ a unit normal vector field to $P_0$ in $\h ^3$. 

We parametrize $\h ^3$ in Fermi coordinates by $ F : \r \times P_0 \to \h ^3 $, where 
$$ F(t,x) := {\rm exp}_x (t n(x)) ,$$here ${\rm exp}$ denotes the exponential map in $\h ^3$. The metric of $\h ^3$ in these coordinates is $dt ^2 + \cosh^2 (t) g_{-1} $, where $g_{-1}$ is the standard hyperbolic metric of curvature $-1$. Observe that 
$$ P_t := \set{ F(t,x) \, : \, x \in H_0} $$is an equidistant surface of $P_0$ of constant mean curvature $ {\rm tgh} (t)$. 

The group $\Gamma$ extends to isometries of $\h ^3$: for $\gamma \in \Gamma $, $t \in \r $ and $x \in H_0$, define 
$$ \gamma (F(t,x)) = F(t,\gamma (x)) .$$

The extended action leaves each $P_t$ invariant and $P_t / \Gamma = S_t$ is a constant sectional curvature umbilical surface in $\h ^3 /\Gamma $. $ \man \equiv \h^3 / \Gamma $ is homeomorphic to $\r \times S$.

Let $\beta $ be a simple closed geodesic of $S$; $\beta $ lifts to a complete geodesic $\tilde \beta$ of $P_0$. $F(\r \times \tilde \beta)$ is a totally geodesic plane in $\h ^3$ which quotients to a totally geodesic annulus $A(\beta)$ in $\man $.

Let $\beta _1 . \ldots , \beta _k$, where $k$ is the genus of $S$, be pairwise disjoint simple closed geodesics of $S$, separating $S$ in two components. Let $\Sigma _0$ be one of the components. $\Sigma _0$ lifts to $\tilde \Sigma _0 \subset P_0$, $\tilde \Sigma _0$ bounded by $\bigcup _{i=1}^k \tilde \beta _i$ . Note that $F_t (\tilde \Sigma _0)= F(t, \tilde \Sigma _0 ) \subset P_t$ is a constant intrinsic curvature surface, of constant mean curvature $ {\rm tgh} (t)$. The boundary of $F_t (\tilde \Sigma _0)$ meets the totally geodesic planes $F(\r \times \tilde \beta _i)$ orthogonally.

The quotient of $M=\set{F_t (\tilde \Sigma _0) \, : \, t \in \r }$ is then a hyperbolic three manifold $\amb = M/\Gamma$, of constant sectional curvatures $-1$, bounded by totally geodesic annuli $ \partial \amb = \bigcup _{i=1}^k  A(\beta _i)$. $\amb$ is foliated by the equidistant surfaces $\Sigma _t$, the quotient of $F_t (\tilde \Sigma _0)$. Each $\Sigma _t$ meets $\partial \amb$ orthogonally. Moreover, the mean curvature of $\Sigma _t$ is given by $H = {\rm tgh } (t)$, $t \geq 0$. 

Observe that the totally umbilic surface $\Sigma _t$, $t\geq 0$, separates $\man / \Gamma $ into two components, say $\mathcal H _t$ is the mean convex side. Note that the mean curvature vector along $\Sigma _t$ points into the component where $\Sigma _0$ is contained. When $t=0$ and $\Sigma _0$ is minimal, both components are isometric and we denote the component containing $\Sigma _t$, for $t>0$, as $\mathcal H _0$. Finally, we will denote by $\mathcal H (t) = \mathcal H_0 \cap \mathcal H _t$, for all $t >0$. Observe that $\mathcal H (t)$ is nothing but the connected component of $\amb \setminus \left( \Sigma _0 \cup \Sigma _t\right)$ whose boundary is mean convex.

\item {\bf Model 4:} Finally, we describe the case $H=1$. Consider a horosphere $H_{0}$ of $\h ^{3}$ and a $\zz ^{2}$ group $\Gamma$ of parabolic isometries leaving $H_{0}$ invariant. Let $n$ be a unit normal vector field to $H_{0}$ in $\h ^{3}$, pointing to the mean convex side of $H_{0}$. $\h ^{3}$ has the Fermi coordinates $ F : \r \times H_0 \to \h ^3 $, where 
$$ F(t,x) := {\rm exp}_x (t n(x)) ,$$here ${\rm exp}$ denotes the exponential map in $\h ^3$. The metric of $\h ^3$ in these coordinates is $dt ^2 + e^{2t} g_{e} $, where $g_{e}$ is the standard Euclidean metric of curvature $0$.

As before, $\Gamma$ acts on $\h^{3}$, leaving each horosphere
$$ H_{t} = \set{ F(t,x) \, :\, x \in H_{0}} $$invariant. Each $H_{t}$ has mean curvature $1$. $\man = \h^{3}/ \Gamma$ is a hyperbolic three manifold of constant sectional curvatures $-1$ isometric to $\r \times \t ^{2}$ with the metric $dt^{2 } + e^{-2t} g_{e}$, where $g_{e}$ is the standard flat metric of $\t ^{2} \equiv H_{0}/\Gamma $, a $2-$torus. 

Let $\beta _{1}$ and $\beta _{2}$ be disjoint simple closed geodesics of $\t^{2}$, and let $\Sigma _{0}$ be one of the annuli of $\t ^{2}$ bounded by $\beta _{1} \cup \beta _{2}$. $\Sigma _0$ lifts to $\tilde \Sigma _0 \subset H_0$, $\tilde \Sigma _0$ bounded by $ \tilde \beta _1 \cup \tilde \beta _{2}$, where $\tilde \beta _{i}$ is the lift of $\beta _{i}$ to $H_{0}$, a horocycle for each $i=1,2$. Note that $F_t (\tilde \Sigma _0)= F(t, \tilde \Sigma _0 ) \subset H_t$ is a flat surface, of constant mean curvature $1$. The boundary of $F_t (\tilde \Sigma _0)$ meets the totally geodesic planes $F(\r \times \tilde \beta _i)$ orthogonally, $i=1,2$.

The quotient of $M=\set{F_t (\tilde \Sigma _0) \, : \, t \in \r }$ is then a hyperbolic three manifold $\amb = M/\Gamma$ with totally geodesic boundary $\partial \amb = \bigcup_{i=1}^{2}F(\r \times \beta _{i}) / \Gamma$. $\amb$ is foliated by the equidistant surfaces $\Sigma _t$, the quotient of $F_t (\tilde \Sigma _0)$. Each $\Sigma _t$ meets $\partial \amb$ orthogonally. Each $\Sigma _{t} = F(t, \tilde \Sigma _{0})/\Gamma$ is a flat constant mean curvature one annulus meeting $\partial \amb$ orthogonally. 

Observe that the flat torus $\Sigma _0$ separates $\man $ into two components. We denote by $\mathcal P$ the mean convex side of $\Sigma$, which is parabolic.

\end{itemize}

The conclusion of the rigidity result in this case is quite surprising under the existence of a free boundary $H-$surface $\Sigma$, $H \in (0, 1)$, achieving the equality in the area estimate. It says that we can find an embedded totally geodesic minimal surface $\Sigma _m$ orthogonal to $\partial \amb$ in the mean convex side of $\amb \setminus \Sigma $ such that a domain in $\amb $ must be isometric to the model $\mathcal H (t)$ constructed above, whose boundary components are isometric to $\Sigma$ and $\Sigma _{m}$. When $H=1$, the mean convex side of $\amb \setminus \Sigma$ is isometric to the model $\mathcal P$, and $\Sigma$ is one of the $\Sigma _t$, as expected. Specifically,

\begin{quote}
{\bf Theorem \ref{Th:Rigidity2}.} {\it Let $(\amb,\partial \amb) $ be a complete orientable Riemannian $3-$manifold with boundary. Assume that $\amb$ has sectional curvatures $K_{sect} \leq -1$ and $\partial \amb$ is totally geodesic. Assume that there exists a compact oriented embedded $H-$surface $\Sigma \subset \amb$, $H \in (0, 1]$, orthogonal to $\partial \amb$ with non-positive Euler characteristic. Then:
\begin{itemize}
\item If $H \in (0,1)$, $\Sigma$ separates and $ \abs{\Sigma} = \dfrac{2\pi \abs{\chi(\Sigma)}}{1- H^2}$, $\chi(\Sigma ) < 0$, then there exists a totally geodesic minimal surface $\Sigma _m$ orthogonal to $\partial \amb$ and an isometry 
$$ F: \left( [0,  {\rm arctgh } \, H) \times \Sigma ,  dt^2 + \cosh ^2 \left( t \right)g_{-1} \right) \to \amb ,$$where $g_{-1}$ denotes the standard metric of constant curvature $-1$, such that 
\begin{itemize}
\item $F (0, \Sigma) = \Sigma $ and $F( {\rm arctgh } \, H , \Sigma) = \Sigma _{m}$, and
\item $ F (t, \Sigma ) = \Sigma _{t}$ is an embedded totally umbilic $H-$surface, $H={\rm tgh}( {\rm arctgh } \, H -t)$, orthogonal to $\partial \amb$ for all $t \in [0 ,  {\rm arctgh } \, H)$.
\end{itemize}

Moreover, if $\Sigma _m$ is non-orientable, a tubular neighborhood of $\Sigma _m$ is foliated by its equidistants $\Sigma _t$.




\item If $H=1$ and  $\chi (\Sigma) =0$, the mean convex side of $\amb \setminus \Sigma $, call this component $\mathcal U$, is isometric to $ [0, + \infty) \times \Sigma$ endowed with the product metric $g = dt^2 + e^{-2t}g_{e}$, where $g_e$ is the standard Euclidean metric of curvature $0$. That is, $\mathcal U$ is isometric to a cusp hyperbolic end $\mathcal P$ corresponding to Model 4 and $\Sigma$ is a slice.

\end{itemize}}
\end{quote}

The results above extend to compact $H-$surfaces without boundary. The area estimate takes the following form

\begin{quote}
{\bf Lemma \ref{Lem:AreaCompact}.} {\it Let $\amb $ be a complete orientable Riemannian $3-$manifold with sectional curvatures $ K_{sect} \leq -1 $. Let $\Sigma \subset \amb$ be a closed oriented $H-$surface, $H \in [0,1)$, of genus $g(\Sigma ) \geq 2$. Then, 
\begin{equation*}
4\pi (g(\Sigma) -1) \geq ( 1- H^2 )\abs{\Sigma} .
\end{equation*}

Moreover, equality holds if, and only if, $\Sigma$ is umbilic, $K_\Sigma =  H^2-1$ and $K_{sect} \equiv -1$ along $\Sigma$.}
\end{quote}

As a first consequence of the above area estimate we have that, when $H^{2} <1$, $\Sigma$ has genus at least two. Observe that Model 3 has a counterpart without boundary. Let $S$ be the quotient of $\h ^2$ by a cocompact Fuchsian group $\Gamma$ of isometries of $\h^2$. Consider Fermi coordinates $F: \r \times P_0 \to \h ^3$, where $P_0 \equiv \h^2$ is a isometrically embbeded totally geodesic plane in $\h ^3$. Then, 

\begin{itemize}
\item {\bf Model 5:} $ \amb = \set{F_t (P_0) \, : \, t \in \r } / \Gamma $ is a complete hyperbolic three-manifold without boundary.  One can see that $\amb$ is foliated by the equidistant surfaces $\Sigma _t$, the quotient of $F_t (\tilde \Sigma _0)$. Each $\Sigma _{t}$ is a constant intrinsic curvature surface, of constant mean curvature $ {\rm tgh} (t)$.

Again, $\Sigma _t$, which is a totally umbilic surface, separates $\amb $ into two components. As we did in Model 3, we will denote by $\widetilde{ \mathcal H} (t)$, $t>0$, the connected component of $\amb \setminus \left( \Sigma _0 \cup \Sigma _t \right)$ whose boundary is mean convex. 
\end{itemize}

In the case we get the equality in the area estimate above, we can obtain the existence of an embedded totally geodesic surface. 

\begin{quote}
{\bf Theorem \ref{Th:RigidityCompact}.} {\it Let $\amb $ be a complete orientable Riemannian $3-$manifold with sectional curvatures $K_{sect} \leq -1$. Assume that there exists a compact oriented embedded $H-$surface $\Sigma \subset \amb$, $H \in (0, 1)$, $\Sigma$ separates $\amb$, with negative Euler characteristic such that 
$$ \abs{\Sigma} = \dfrac{2\pi \abs{\chi(\Sigma)}}{1- H^2} ,$$then there exists an embedded totally geodesic minimal surface $\Sigma _m$ and an isometry 
$$ F: \left( [0,  {\rm arctgh } \, H] \times \Sigma ,  dt^2 + \cosh ^2 \left( t \right)g_{-1} \right) \to \amb ,$$where $g_{-1}$ denotes the standard metric of constant curvature $-1$, such that 
\begin{itemize}
\item $F (0, \Sigma) = \Sigma $ and $F( {\rm arctgh } \, H , \Sigma) = \Sigma _{m}$, and
\item $ F (t, \Sigma ) = \Sigma _{t}$ is an embedded totally umbilic $H-$surface, $H={\rm tgh}( {\rm arctgh } \, H -t)$, for all $t \in (0 ,  {\rm arctgh } \, H)$.
\end{itemize}

Moreover, if $\Sigma _m$ is non-orientable, a tubular neighborhood of $\Sigma _m$ is foliated by its equidistants $\Sigma _t$.}



\end{quote}

\begin{remark}
The case $H=1$ was done by L. Mazet and H. Rosenberg in \cite{LMazHRos}.
\end{remark}

It is remarkable that the minimal case can not be classified. Observe that any embedded totally geodesic surface in a hyperbolic three-manifold satisfy the area estimate given above and, at a first glance, we have no chance to know the structure of the three-manifold. Nevertheless, the above result says that the existence of a totally umbilic $H-$surface $\Sigma$ embedded in a hyperbolic three-manifold determines the mean convex side up to we reach a totally geodesic surface.

\section{Manifolds with umbilic boundary and $0 \leq K_{sect} \leq 1$}\label{SectPos}

In this section we will study capillary $H-$disks immersed in a three-manifold with umbilic boundary and sectional curvatures between $0$ and $1$. First, we obtain a lower bound for the area of such a capillary $H-$disk. Second, we obtain rigidity of the three-manifold with boundary assuming that the $H-$disk is orthogonal to the boundary of the three-manifold and achieves the bound for the area. In the latter case, we assume the boundary of the three-manifold is totally geodesic.

\subsection{Area estimate}

In this section, we will obtain a lower bound for the area of a capillary $H-$disk immersed in a Riemannian $3-$manifold whose boundary is contained in the boundary of the manifold. Such a lower bound will depend on the sectional curvatures of the $3-$manifold, the second fundamental form of the boundary of the $3-$manifold, the mean curvature of the $H-$disk and the length of the boundary of the disk. Let us now be more precise.

\begin{lemma}\label{Lem:Area}
Let $(\amb,\partial \amb) $ be a complete orientable Riemannian $3-$manifold with boundary. Let $\Sigma \subset \amb$ be a compact oriented $H-$disk ($H\geq 0$) with boundary $\partial \Sigma \subset \partial\amb$. Assume that 
\begin{itemize} 
\item The sectional curvatures of $\amb $ satisfy $ K_{sect} \leq 1 $,
\item  $\partial \amb $ is umbilic, with umbilicity factor $\alpha \in \r $,
\item $\Sigma$ is a capillary disk of angle $\beta \in [0, \pi /2)$,
\end{itemize}then, 
\begin{equation*}
2\pi  \leq (1+H^2)\abs{\Sigma} + \frac{\alpha+ (H+{\rm max}_{\partial \Sigma}\Phi)\sin\beta }{\cos \beta} \abs{\partial \Sigma} .
\end{equation*}

Moreover, equality holds if, and only if, $\Sigma$ is umbilic, $K_\Sigma = 1 + H^2$ and $K_{sect} \equiv 1$ along $\Sigma$.
\end{lemma}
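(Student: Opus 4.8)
The plan is to apply the Gauss--Bonnet theorem to the disk $\Sigma$ and estimate the two resulting integrals separately. Since $\Sigma$ is a disk, $\chi(\Sigma)=1$, so
$$ \int_\Sigma K_\Sigma \, dA + \int_{\partial\Sigma} k_g \, ds = 2\pi, $$
where $k_g$ is the geodesic curvature of $\partial\Sigma$ in $\Sigma$ taken with respect to the inward conormal. The strategy is to bound the interior integrand pointwise by $1+H^2$ and the boundary integrand pointwise by $\frac{\alpha+(H+\Phi)\sin\beta}{\cos\beta}$; substituting these and using $\Phi\leq {\rm max}_{\partial\Sigma}\Phi$ then gives the stated inequality.

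For the interior term I would invoke the Gauss equation for $\Sigma\subset\amb$, namely $K_\Sigma = K_{sect}(T_p\Sigma) + K_e$, where $K_e$ is the extrinsic curvature. Since $\Phi=\sqrt{H^2-K_e}$, we have $K_e = H^2-\Phi^2$, so $K_\Sigma = K_{sect}(T_p\Sigma) + H^2 - \Phi^2$. The hypothesis $K_{sect}\leq 1$ together with $\Phi^2\geq 0$ yields $K_\Sigma \leq 1+H^2$ pointwise, and hence $\int_\Sigma K_\Sigma\,dA \leq (1+H^2)\abs{\Sigma}$.

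The boundary term is the crux. Along $\partial\Sigma$ let $T$ be the unit tangent and let $\kappa=\camb_T T$ be the curvature vector, which lies in the normal plane to $\partial\Sigma$, a plane spanned by the orthonormal frame $\set{\nu,N}$ adapted to $\Sigma$ and containing the inward normal $\eta$ of $\partial\amb$. Using that $T$ is tangent to $\partial\amb$ with $\meta{T}{\eta}=0$ and that $\partial\amb$ is umbilic, I compute $\meta{\kappa}{\eta}=II_{\partial\amb}(T,T)=\alpha$, while $\meta{\kappa}{N}=II_\Sigma(T,T)$. The capillary condition $\meta{\nu}{\eta}=-\cos\beta$ fixes the rotation between the two orthonormal bases of the normal plane; expressing $\nu$ in terms of $N$ and $\eta$ and projecting $\kappa$ gives
$$ k_g = \frac{\alpha}{\cos\beta} + \tan\beta \; II_\Sigma(T,T). $$
Since the principal curvatures of $\Sigma$ are $H\pm\Phi$, the normal curvature obeys $II_\Sigma(T,T)\leq H+\Phi$, and because $\tan\beta\geq 0$ this produces the desired pointwise bound on $k_g$. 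Pinning down the signs and the factor $1/\cos\beta$ is the main obstacle: the correct orientation of $N$ relative to $\eta$ (giving $\meta{N}{\eta}=-\sin\beta$) is precisely what makes the $II_\Sigma(T,T)$ term appear with the sign compatible with equality.

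For the equality case I would trace the inequalities backward. Overall equality forces the interior estimate to be an equality pointwise, so $K_{sect}(T_p\Sigma)=1$ and $\Phi\equiv 0$; thus $\Sigma$ is umbilic, $K_\Sigma = 1+H^2$, and $K_{sect}\equiv 1$ along $\Sigma$. Conversely, once $\Sigma$ is umbilic one has $\Phi\equiv 0$ and $II_\Sigma(T,T)=H$ at every point of $\partial\Sigma$, so the boundary estimate is automatically saturated with $k_g=\frac{\alpha+H\sin\beta}{\cos\beta}$. This is why the equality characterization can be phrased purely in terms of the interior geometry: the boundary bound is tight whenever $\Sigma$ is umbilic.
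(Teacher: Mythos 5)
Your proposal is correct and follows essentially the same route as the paper: Gauss--Bonnet on the disk, the pointwise interior bound $K_\Sigma = K_{sect}+H^2-\Phi^2\leq 1+H^2$ from the Gauss equation, and the boundary identity $\cos\beta\, k_g=\alpha+\sin\beta\, II_\Sigma(T,T)$ obtained from umbilicity of $\partial\amb$ and the capillary relation $-\eta=\cos\beta\,\nu+\sin\beta\,N$, followed by $II_\Sigma(T,T)\leq H+\Phi$. The only cosmetic difference is that you project the curvature vector $\camb_T T$ onto the normal plane of $\partial\Sigma$ whereas the paper differentiates $\eta$ along the boundary; these are the same computation, and your equality analysis (interior slack forces $\Phi\equiv 0$ and $K_{sect}\equiv 1$, which then saturates the boundary bound automatically) matches the paper's.
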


\begin{proof}
First, by the Gauss equation and the Arithmetic-Geometric Inequality, we obtain
$$ K _{\Sigma} = K_{e}+ K_{sect} \leq H^{2} +1,$$hence, integrating over $\Sigma$, the Gauss-Bonnet Formula yields
\begin{equation}\label{Eq1}
2\pi = \int_{\Sigma} K_{\Sigma} + \int_{ \partial \Sigma} k_{g} \leq (1+H^{2})\abs{\Sigma} + \int _{\partial \Sigma} k_{g} .
\end{equation}

Second, let $t$ denote a unit tangent vector field along $\partial \Sigma$, clearly, $t \in \campo (\partial \amb)$, and let $n = J t$ the rotation of angle $\pi/2$ on $\partial \amb$.

On the one hand, since $\set{t,n}$ is an orthonormal frame along $\partial \Sigma$, we have
$$ 2 H_{\partial \amb} = -\meta{t}{\camb _t \eta} - \meta{n}{\camb _n \eta} ,$$and since $\partial \amb$ is umbilic, we obtain
$$ -\meta{t}{\camb _t \eta} = \alpha . $$

On the other hand, by the capillary condition, $ -\eta = \cos \beta \, \nu + \sin \beta \,  N $ along $\partial \Sigma$, hence 
$$ -\meta{t}{\camb _t \eta} = \cos \beta \meta{t}{\camb _t \nu} + \sin \beta \meta{t}{\camb _t N} = \cos \beta \, k_g - \sin \beta \, II_\Sigma (t , t ) .$$

Therefore, combining both equations we get
\begin{equation}\label{Eq2}
\cos \beta \,  k_g  = \alpha + \sin \beta \, II_\Sigma (t, t) \text{ along } \partial \Sigma .
\end{equation}

Finally, since $\Sigma$ has constant mean curvature $H$, we have that $II_\Sigma (t, t) \leq H + \Phi $ and hence, substituting \eqref{Eq2} into \eqref{Eq1} yields
\begin{equation}\label{Eq3}
2\pi \cos \beta \leq (1+H^2)\cos \beta \abs{\Sigma} + (\alpha+ (H+{\rm max}_{\partial \Sigma}\Phi) \sin\beta  ) \abs{\partial \Sigma} ,
\end{equation}as claimed.

Moreover, equality holds if and only if, equality holds in \eqref{Eq1}, that is, $K_\Sigma = H^2 +1$, which implies that $\Sigma $ is umbilic and $K_{sect} \equiv 1$ along $\Sigma$. From the Gauss Equation, we deduce $K_\Sigma = 1 + H^2$.
\end{proof}

\subsection{Rigidity}

Now, we will characterize Riemannian manifolds $(\amb , \partial \amb)$ with totally geodesic boundary  and sectional curvatures $0 \leq K_{sect} \leq 1$ assuming the existence of an oriented $H-$disk meeting $\partial \amb$ orthogonally and of least area. According to Lemma \ref{Lem:Area}, if $\Sigma \subset \amb$ is an oriented $H-$disk  such that $\partial \Sigma \subset \partial \amb$ and meets $\partial \amb$ orthogonally, then 
$$  \abs{\Sigma} \geq \frac{2\pi}{1+H^2} ,$$where we are assuming that $\partial \amb$ is totally geodesic in $\amb$. Hence, we can announce

\begin{theorem}\label{Th:Rigidity}
Let $(\amb,\partial \amb) $ be a complete orientable Riemannian $3-$manifold with boundary. Assume that $\amb$ has sectional curvatures $0 \leq K_{sect} \leq 1$ and $\partial \amb$ is connected and totally geodesic. If there exists a compact oriented embedded $H-$disk $\Sigma \subset \amb$ orthogonal to $\partial \amb$ such that 
$$ \abs{\Sigma} = \frac{2\pi}{1+H^2} .$$

Then: 
\begin{itemize}
\item If $H>0$, the mean convex side of $\amb \setminus \Sigma$, call this component $\mathcal U$, is isometric to $\mathcal B _x (R_H) \cap \s ^3 _+ \subset \s ^3$ with the standard metric, $R_H = \frac{\pi}{2}-{\rm arctg} \, H$ and $x \in \partial \s ^3_+$. Moreover, $\Sigma$ is a disk $D_H$ described in the Model 1.

\item If $H=0$, $\amb$ is isometric either to $\s ^3 _+$ with its standard metric of constant sectional curvature one, or a quotient of $\r \times \s ^2 _+$ with the standard product metric. Moreover, $\Sigma$ is a disk $D_H$ or $D_t$ as described in the Models 1 and 2.
\end{itemize}

\end{theorem}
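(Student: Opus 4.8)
The plan is to extract pointwise rigidity from the equality case of Lemma \ref{Lem:Area} and then propagate it into $\amb$ by flowing $\Sigma$ along its normal geodesics. Specializing Lemma \ref{Lem:Area} to a totally geodesic boundary ($\alpha=0$) and a free boundary disk ($\beta=0$), equality forces $\Sigma$ to be totally umbilic with shape operator $H\,\mathrm{Id}$, to have constant intrinsic curvature $K_\Sigma=1+H^2$, and to satisfy that the ambient sectional curvature of the tangent planes to $\Sigma$ equals $1$ along $\Sigma$; moreover \eqref{Eq2} collapses to $k_g\equiv 0$, so $\partial\Sigma$ is a geodesic of $\Sigma$. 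A simply connected surface of constant curvature $1+H^2$ bounded by a geodesic and of area $2\pi/(1+H^2)$ is a hemisphere of the round sphere of radius $1/\sqrt{1+H^2}$; thus $\Sigma$ is intrinsically this cap, meeting $\partial\amb$ orthogonally along its equator, which already identifies $\Sigma$ with the disk $D_H$ of Model 1 (and with $D_0$ or a slice $D_t$ when $H=0$).

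Next I would consider the equidistants $\Sigma_t=\set{\exp_p(tN(p)):p\in\Sigma}$ on the mean convex side, where $N$ is the unit normal with $\vec{H}=2HN$. Here the hypothesis that $\partial\amb$ is totally geodesic is essential: along $\partial\Sigma$ the orthogonality of $\Sigma$ makes $N$ tangent to $\partial\amb$, so each normal geodesic issuing from $\partial\Sigma$ remains in $\partial\amb$; hence $\partial\Sigma_t\subset\partial\amb$ and, by the reflection symmetry of the construction across $\partial\amb$, the $\Sigma_t$ continue to meet $\partial\amb$ orthogonally. Along each normal geodesic $\gamma_p(t)=\exp_p(tN(p))$ the shape operator $S(t)$ of $\Sigma_t$ obeys the Riccati equation $S'(t)+S(t)^2+\mathcal{R}_\gamma(t)=0$, with $\mathcal{R}_\gamma=R(\cdot,\gamma')\gamma'$ and initial datum $S(0)=H\,\mathrm{Id}$.

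The core of the argument is to show that the pinching $0\le K_{sect}\le 1$, together with the extremal area of $\Sigma$, forces $\mathcal{R}_\gamma\equiv\mathrm{Id}$ throughout the swept region. The two-sided bound on $K_{sect}$ confines $\mathcal{R}_\gamma$ between $0$ and $\mathrm{Id}$, and comparison with the constant-curvature-$1$ model shows the equidistants cannot focus faster than the model geodesic spheres $S(t)=\cot(R_H-t)\,\mathrm{Id}$, which reach a focal point exactly at $t=R_H=\frac{\pi}{2}-{\rm arctg}\,H$. Because $\Sigma$ already realizes the lower area bound of Lemma \ref{Lem:Area}, I expect each $\Sigma_t$ to saturate its own area estimate, so the equality case applies to every $\Sigma_t$ and yields $\mathcal{R}_\gamma\equiv\mathrm{Id}$ with umbilic $\Sigma_t$ for all $t\in[0,R_H)$. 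Integrating then gives the spherical warped product metric
\[ dt^2+\sin^2(R_H-t)\,g_1 \]
on $\mathcal{U}$, where $g_1$ denotes the round metric of constant curvature $1$; the equidistants collapse to a point at $t=R_H$, so $\mathcal{U}$ is isometric to $\mathcal{B}_x(R_H)\cap\s^3_+$ with $x$ the focal point on $\partial\s^3_+$. When $H=0$ one has $S(0)=0$, and the same Riccati analysis splits into the two extremes allowed by the pinching: $\mathcal{R}_\gamma\equiv\mathrm{Id}$ reproduces the spherical collapse and gives $\s^3_+$, whereas $\mathcal{R}_\gamma\equiv 0$ keeps the equidistants totally geodesic and yields the product $\r\times\s^2_+$ up to quotient, which is precisely the stated dichotomy.

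The main obstacle is the global control of the flow rather than the local ODE. One must show the $\Sigma_t$ stay embedded and regular for the entire range $t\in[0,R_H)$, that they sweep out exactly the component $\mathcal{U}$ and do not stop at an interior focal locus or re-enter through $\partial\amb$, and---most delicately---one must genuinely force $\mathcal{R}_\gamma\equiv\mathrm{Id}$ in the bulk, not merely at $t=0$. For this last point a pointwise Riccati comparison does not suffice by itself; I would use that $\Sigma$, attaining the least area among orthogonal $H$-disks, is stable, and combine the stability inequality with the curvature pinching to rule out any faster focusing, thereby upgrading the comparison to the equality that propagates the rigidity. Assembling the resulting local isometries into a global isometry of $\mathcal{U}$, and of all of $\amb$ in the case $H=0$ where $\Sigma$ need not separate, is where the remaining work lies.
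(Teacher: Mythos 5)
Your setup matches the paper's: extract the equality rigidity of Lemma \ref{Lem:Area} at $t=0$, flow $\Sigma$ by the normal exponential map into the mean convex side (using that $\partial\amb$ totally geodesic keeps the normal geodesics from $\partial\Sigma$ inside $\partial\amb$), and try to propagate the rigidity. But the heart of the matter --- forcing the equality to persist for $t>0$ --- is exactly where your proposal has a genuine gap, and you acknowledge it yourself when you write that you ``expect each $\Sigma_t$ to saturate its own area estimate'' and that a pointwise Riccati comparison ``does not suffice by itself.'' The paper's mechanism is quantitative and does not use stability at all: it applies Gauss--Bonnet to each leaf $\Sigma_t$ (which need not have constant mean curvature, so Lemma \ref{Lem:Area} itself is not invoked for $t>0$) to get $\int_{\Sigma_t}\Phi_t^2\leq G(t):=\abs{\Sigma_t}-2\pi+\int_{\Sigma_t}H_t^2$, then combines the first variation of area and of mean curvature to obtain the differential inequality $G'(t)\leq 2C\,G(t)$ with $G(0)=0$; Gronwall then gives $G\equiv 0$, hence every leaf is umbilic and $H_t\bigl(\mathrm{Ric}(N_t)-2\bigr)=0$ pointwise. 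This is the missing idea in your write-up: without it you have no way to force $\mathcal{R}_\gamma\equiv\mathrm{Id}$ (or $\equiv 0$) in the bulk, since the pinching $0\leq K_{sect}\leq 1$ alone permits intermediate behavior varying from geodesic to geodesic.

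Your proposed substitute --- that $\Sigma$, ``attaining the least area among orthogonal $H$-disks, is stable'' --- does not work as stated. For $H>0$ the surface is not a critical point of the area functional, so realizing the lower bound of Lemma \ref{Lem:Area} does not translate into a second variation inequality for area; and even for $H=0$ the hypothesis is an equality in an a priori estimate, not a minimizing property within a variational class. Two further points you leave open are also handled explicitly in the paper: (i) a connectedness argument (the paper's Claim 1) showing that if $H_t>0$ at one point of a leaf then $H_t>0$ on the whole leaf, which is what produces the clean dichotomy $\mathrm{Ric}(N_t)\equiv 2$ versus $\mathrm{Ric}(N_t)\equiv 0$ in the $H=0$ case; and (ii) the identification $\epsilon_0=R_H$ and the globalization of the local isometry, which the paper gets from the explicit formulas $H_t=\mathrm{tg}(\mathrm{arctg}\,H+t)$ and $\abs{\Sigma_t}=2\pi\cos^2(t+\mathrm{arctg}\,H)$ together with simple connectivity of the model ball (and a covering-space argument in the product case). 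Your intrinsic identification of $\Sigma$ as a hemispherical cap and the final warped-product form of the metric are consistent with the paper, but as written the proposal does not close the central propagation step.
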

\begin{proof}
In order to get the rigidity result, we follow similar arguments to those included in \cite[Theorem 1]{LMazHRos}. Denote by $N$ the unit normal along $\Sigma$ pointing to the mean-convex side of $\amb \setminus \Sigma $, denote this component by $W$. In particular, if $\Sigma$ is minimal we can consider any component of $\amb \setminus \Sigma $. Consider the variation $ F : (-\epsilon , \epsilon) \times \Sigma \to \amb $ given by $ F_{t} (p) = {\rm exp}_{p}(t N(p)) $, where ${\rm exp}$ is the exponential map in $\amb$. Note that, since $\Sigma $ is orthogonal to $\amb$, the normal $N$ along the boundary $\partial \Sigma \subset \partial \amb$ is a tangent vector in $\partial \amb$ hence, since $\partial \amb$ is totally geodesic, for all $p\in \partial \Sigma$ we have
$$ F_{t}(p) \in \partial \amb  \text{ for all } t \in (-\epsilon , \epsilon) .$$

Set $\Sigma _{t} = F_{t} (\Sigma) \subset \amb $ and denote by $H_t$ and $\Phi _t := \sqrt{H_t ^2 - K_e ^t}$ the mean and skew curvature for all $t \in (-\epsilon , \epsilon)$, respectively. It is clear that for $\epsilon >0$ small enough, $\Sigma _t$ is an embedded disk such that $\Sigma _t \cap \partial \amb = \partial \Sigma _t$. Moreover, by the discussion above, $\Sigma _{t} \subset \amb$ and $\partial \Sigma _{t} \subset \partial \amb$. Also, by the Gauss Lemma, $\Sigma _{t } $ is orthogonal to $\amb$ for all $t \in (-\epsilon , \epsilon)$, $\epsilon$ small enough. Define
$$ \epsilon _0 := {\rm sup}\set{\epsilon >0 \, : \, \, F : [0,\epsilon)\times \Sigma  \to \amb  \text{ is embedded and } \Sigma _t  \cap \partial \amb = \partial \Sigma _t  } >0. $$

Let us pullback the metric $\meta{}{}$, via $F$, to $[0, \epsilon _0 ) \times \Sigma $, that is, $g = F^*\meta{}{}$ and 
$$ F : ( [0, \epsilon _0) \times \Sigma  ,g ) \to (\amb  , \meta{}{})$$ is a local isometry. Clearly, the pullback metric $g$ can be written as $g = g_t + dt^2$, where $g_t$ is a smooth family of metrics on $\Sigma $, in other words, $g_t$ is the induced metric of $\Sigma _t \subset \amb$. 

First, since $\Sigma _0 \equiv \Sigma$ achieves the equality in Lemma \ref{Lem:Area}, $(\Sigma _0 , g_0)$ is umbilic, $K_{sect}\equiv 1$ along $\Sigma _0$ and it isometric to a hemisphere of constant Gaussian curvature $1+H^2$ with its standard metric, i.e., $\Sigma \equiv \s ^2 _+\left(\frac{1}{\sqrt{1+H^2}}\right) $.

Second, by the Gauss-Bonnet formula and the Gauss Equation, we have 
\begin{equation*}
\begin{split}
2\pi &= \int _{\Sigma _{t}} K_{\Sigma _{t}} + \int _{\partial \Sigma} k_{g}^{t} \\
 & = \int _{\Sigma} (H_{t} +\Phi _{t})(H_{t}-\Phi _{t}) + \int_{\Sigma}K_{sect} + \int _{\partial \Sigma} k_{g}^{t} \\
 & \leq \abs{\Sigma _{t}} + \int _{\Sigma _{t}} H_t^{2} - \int _{\Sigma _{t}} \Phi _{t}^{2}  + \int _{\partial \Sigma} k_{g}^{t} ,
\end{split}
\end{equation*}and, bearing in mind that $\partial \amb$ is totally geodesic and $\Sigma _{t}$ orthogonal to $\partial \amb$, we get $ \int _{\partial \Sigma} k_{g}^{t} =0 $, which yields that 
\begin{equation}\label{Eq4}
\int _{\Sigma _{t}} \Phi _{t}^{2} \leq  \abs{\Sigma _{t}} - 2\pi + \int _{\Sigma _{t}} H_t^{2}  \text{ for all } t \in [0 , \epsilon _0) .
\end{equation}

For each $p \in \Sigma $, we denote $ \gamma _{p } $ the geodesic in $\amb$ with initial conditions $\gamma _{p}(0) = p$ and $\gamma _{p} ' (0) = N(p)$. Therefore, $F_{t} (p) = \gamma _{p} (t)$. Set 
$$ G(t) := \abs{\Sigma _{t}} - 2\pi + \int _{\Sigma _{t}} H_t^{2} \text{ for all } t \in  [0 , \epsilon _0) . $$


Then, by the First Variation Formula and the First Variation of the Mean Curvature
$$ \dfrac{\partial H_t}{\partial t} ={\rm Ric}(N_t) + 2 H^2_t + 2 \Phi _t ^2 \geq 0 \text{ for all } t \in [0 , \epsilon _0),$$and hence $h_p(t) = H_t (p)$ is a nondecreasing function of $t$ such that $h_p (t)\geq 0 $ for all $t \in [0  , \epsilon _0)$ and all $p \in \Sigma$. Thus, we obtain
\begin{equation*}
\begin{split}
G' (t) &= \int _{\Sigma _t} 2 H_t \left( \frac{\partial H_t}{\partial t} - H_t ^2 -1 \right) - \int_{\partial \Sigma _t} \meta{\gamma ' _p (t)}{\eta} \\
 & = \int _{\Sigma _t}  H_t \left( 2 \Phi _t ^2 + {\rm Ric}(N_t) -2 \right)\\
 &\leq  2 \int _{\Sigma _t}  H_t \Phi _t ^2
\end{split}
\end{equation*}where we have used that ${\rm Ric}(N_t) -2 \leq 0$ and $H_t \geq 0$, by the curvature hypothesis, and $\meta{\gamma ' _p (t)}{\eta} =0 $, since $\gamma ' _p (t) \in T_{\gamma _p (t)}\partial \amb$ and $\eta$ is normal along $\partial \amb$.

Therefore, since $\Sigma _t$ is a compact disk, for all $\bar \epsilon \in (0, \epsilon _0)$ there exists a constant $C >0$ (depending on $\bar \epsilon$) such that $\abs{H_t}\leq C $ for all $(t,p) \in [0  , \bar \epsilon) \times \Sigma  $. Thus, 
$$ G' (t) \leq 2 C G(t) \text{ for all } t \in [0  , \bar \epsilon) ,$$and, by Gronwall's Lemma, we get  
$$ G(t) \leq G(0)e^{2Ct} \text{ for all } t \in [0  , \bar \epsilon ) .$$

Since $\Sigma _0$ is totally umbillic, $(1+H^2)\abs{\Sigma _0} = 2\pi$ and $G(0)=0$, then
$$ G(t) \leq 0 \text{ for all } t \in [0  ,  \bar \epsilon ) .$$

But, $G(t) \geq 0 $ for all $t \in [0  , \bar \epsilon )$ by \eqref{Eq4} which implies that $G(t) \equiv 0$ for all $t \in [0  , \bar \epsilon )$. Continuing the argument, we obtain that $G(t) \equiv 0$ for all $t \in [0  ,  \epsilon _0)$.

Therefore, for all $t \in [0  , \epsilon _0)$ we have
\begin{enumerate}
\item[(a.1)] $\Sigma _t$ is umbilic,
\item[(a.2)] $H_t ({\rm Ric}(N_t) -2) =0$,
\item[(a.3)] $ 2 \dfrac{\partial H_t}{\partial t} = {\rm Ric}(N_t) + 2 H_t^2 $.
\end{enumerate}

We assert that if the mean curvature is positive at a point of an equidistant then it is positive at any point of this equidistant. More precisely,

\begin{quote}
{\bf Claim 1:} {\it Let $(p,t) \in (0 ,\epsilon _0)\times \Sigma $ be such that $H_t (p) >0$, then $H_t (q)>0$ for all $q \in \Sigma $.}
\end{quote}
\begin{proof}[Proof of Claim 1]
The proof is as in \cite[Claim 2]{LMazHRos}, but we include it here for the sake of completeness. Consider $t \in (0, \epsilon _0)$ and assume that $H_t (p) >0$ and set 
$$ \Omega  := \set{ q \in \Sigma  \, : \, H_t (q) >0} .$$

By continuity, $\Omega $ is a nonempty open subset of $\Sigma$. Let us see that $\Omega $ is closed. By continuity of the Ricci curvature and (a.2), we have that ${\rm Ric}_{(t,q)}(N_t) =2$ for all $q \in \overline{\Omega}$. Thus, item (a.3) implies that 
$$  \dfrac{\partial H_t}{\partial t} \geq 2 >0 \text{ for all } q \in \overline{\Omega},$$and therefore $H_t (q) > H \geq 0$, in other words, $q \in \Omega$. Thus, $\Omega$ is open and closed in $\Sigma$ so $\Omega \equiv \Sigma$ as claimed.
\end{proof}

Now, we will consider two cases, either $H=0$ or $H\neq 0$.

\begin{quote}
{\bf Case A: }{\it If  $H >0$, the mean convex side of $\amb \setminus \Sigma$, call this component $\mathcal U$, is isometric to the totally geodesic ball $\mathcal B _x (R_H) \cap \s ^3 _+ \subset \s ^3$ with the standard metric, $R_H = \frac{\pi}{2}-{\rm arctg} \, H$ and $x \in \partial \s ^3_+$. Moreover, $\Sigma$ is a disk $D_H$ described in the Model 1.}
\end{quote}
\begin{proof}[Proof of Case A]
In this case, by Claim 1, we obtain $H_t > 0$ for all $t \in [0 , \epsilon _0 )$ and hence ${\rm Ric}(N_t) = 2$, which implies that the sectional curvature of any two plane orthogonal to $\Sigma _t$ is one for all $t \in [0 , \epsilon _0 )$. 

On the one hand, item (a.3) gives
$$ \dfrac{\partial H_t}{\partial t} = 1 + H_t^2 \text{ for all } t \in [0 , \epsilon _0 ).$$

The above equation implies that 
$$ \dfrac{\partial }{\partial t} {\rm arctg} \, H_t = 1 \text{ for all } t \in [0 , \epsilon _0 ),$$then, by the Fundamental Theorem of Calculus, fixing $t \in [0 ,\epsilon _0)$, we get
$$ {\rm arctg} \,H_t (p) =  {\rm arctg} \, H + t \text{ for all } p \in \Sigma ,$$that is, $\Sigma _t$ has constant mean curvature 
\begin{equation}\label{HtPos}
H_t ={\rm tg}\left( {\rm arctg}\,  H + t\right) \text{ for all } t \in [0 , \epsilon _0).
\end{equation}

On the other hand, by the First Variation of the Area, we have 
$$ \dfrac{\partial }{\partial t} \abs{\Sigma _t} = -2 \, {\rm tg}\left( {\rm arctg} \, H + t\right) \abs{\Sigma _t} \text{ for all } t \in [0 , \epsilon _0) , $$which implies that 
$$ \abs{\Sigma _t} ^{1/2}= \sqrt{1+ H^2}\cos \left( t + {\rm arctg} \, H \right) \abs{\Sigma}^{1/2} \text{ for all } t \in [0 , \epsilon _0) ,$$ or, in other words, 
\begin{equation}\label{AreaPos}
\abs{\Sigma _t} =  2\pi \cos^2 \left( t + {\rm arctg} \, H \right) \text{ for all } t \in [0 , \epsilon _0) . 
\end{equation}

Thus, the Gauss-Bonnet Formula, \eqref{HtPos} and \eqref{AreaPos}, we have 
\begin{equation*}
\begin{split}
2\pi &= \int _{\Sigma _t} K_{\Sigma _t}  \leq (1+ H_t^2) \abs{\Sigma _t} \\
  & =  2 \pi \left( 1 + {\rm tg}^2 (t+ {\rm arctg} \, H ) \right) \cos^2 \left( t + {\rm arctg} \, H \right) \\
 & = 2 \pi 
\end{split}
\end{equation*}which tells us that the sectional curvature of the two plane tangent to $\Sigma _t$ is equals to $1$ for all $t \in [0 , \epsilon _0)$, and therefore
$$K_{\Sigma _t} = 1+  {\rm tg}^2 (t+ {\rm arctg} \, H ) = \cos^{-2} \left( t + {\rm arctg} \, H \right) \text{ for all } t \in (\bar \delta , \epsilon _0) .$$

Hence, $\Sigma _t$ is isometric to $\s ^2 \left(\cos \left( t + {\rm arctg} \, H \right)\right)$ for each $t \in [0 , \epsilon _0)$. In this case, $\epsilon _0 = \frac{\pi}{2} -  {\rm arctg} \, H $ and $F_{\epsilon _0} (\Sigma ) $ is a point. Therefore, $ [0 ,\frac{\pi}{2} -  {\rm arctg} \, H ) \times \Sigma$ with the pullback metric 
$$g = dt^2 + \sin ^2 \left( t + {\rm arctg} H\right) g_{+1}$$ is isometric to $\overline{\mathcal B_x (R_H)} \cap \s ^3 _+ $, for $R_H = \frac{\pi}{2} -{\rm arctg} \, H $ and some point $F_{\frac{\pi}{2} -  {\rm arctg} \, H} (\Sigma ) := x \in \partial \s ^3 _+$. Here, $g_{+1}$ denotes the standard metric $\s ^2$ of constant curvature one. Denote by $I : \overline{\mathcal B_x (R_H)} \cap \s ^3 _+ \to [0,\frac{\pi}{2} -  {\rm arctg} \, H ) \times \Sigma $ such an isometry.


So, $\tilde F = F \circ I  :  \overline{\mathcal B_x (R_H)} \cap \s ^3 _+ \to \amb $ is a local isometry and, since $\mathcal B_p (R_H ) \cap \s ^3 _+$ is simply connected and $\tilde F$ is injective on $\Sigma$, then $\tilde F$ is a global isometry onto its image, that is, the mean convex side of $\amb \setminus \Sigma $ is isometric to $\mathcal B_p (R_H) \cap \s ^3 _+$ with the standard metric of constant curvatures one. Hence, Claim A is proved.
\end{proof}

Now, we prove:

\begin{quote}
{\bf Case B:} If $H =0$, then $\amb$ is isometric either to $\s ^3 _+$ with its standard metric of constant sectional curvature one, or to a quotient of $\r \times \s ^2 _+$ with the standard product metric.
\end{quote}

\begin{proof}[Proof of Case B]
Claim 1 and (a.2) yield that 

\begin{enumerate}
\item[(i)] either, $H_t>0$ and ${\rm Ric}(N_t) = 2$  on $[0, \epsilon_0) \times \Sigma$,

\item [(ii)] or, $H_t=0$ and ${\rm Ric}(N_t) = 0$  on $[0, \epsilon_0) \times \Sigma$.
\end{enumerate}

In case (i), arguing as in Case A, we can conclude that each side of $\amb \setminus \Sigma$ is isometric to $\mathcal B _y (\frac{\pi}{2}) \cap \s ^3 _+$, $y = \pm x \in \partial \s ^3_+$, with the standard metric. Hence, $\amb$ must be isometric to $\s ^3 _+$ with the standard metric.

In case (ii), the First Variation Formula says $\abs{\Sigma _t} = 2 \pi $ for all $t \in [ 0, \epsilon _0)$. Therefore, $\Sigma _t$ is totally geodesic and the Gaussian curvature equals to one. Thus, the sectional curvatures of any two plane orthogonal to $\Sigma _t$ are zero, and for the two plane tangent to $\Sigma _t$ is one. Thus, the pullback metric $g$ on $(0, \epsilon _0) \times \Sigma $ can be written as $g = dt ^2 + g_{+1}$.

Therefore, $\epsilon _0 = + \infty$ and $(0, + \infty) \times \Sigma$ endowed with the pullback metric is isometric to $ (0, +\infty) \times  \s ^2 _+ $ with the standard product metric. As above, since both sides are mean convex, $\r \times \Sigma$ is isometric to $\r \times \s ^2 _+$ with the standard product metric. Moreover, since $F : \r \times \s ^2 _+ \to \amb$ is a local isometry, it is a covering map. This concludes the proof of Case B.
\end{proof}

This concludes the proof of Theorem  \ref{Th:Rigidity}.
\end{proof}




\section{Manifolds with umbilic boundary and $K_{sect} \leq -1$}\label{SectNeg}

In this section we will study compact capillary $H-$surfaces of non-positive Euler characteristic immersed in a three-manifold with umbilic boundary and sectional curvatures less or equal to $-1$. First, we obtain an upper bound for the area of such capillary $H-$surfaces. Second, we obtain rigidity of the three-manifold with boundary assuming that the $H-$surface is orthogonal to the boundary of the three-manifold and achieves the bound for the area. In the latter case, we assume the boundary of the three-manifold is totally geodesic.

\subsection{Area Estimate}

The area estimate will follow the lines of Lemma \ref{Lem:Area}. 

\begin{lemma}\label{Lem:Area2}
Let $(\amb,\partial \amb) $ be a complete orientable Riemannian $3-$manifold with boundary. Let $\Sigma \subset \amb$ be a compact oriented $H-$surface ($H\geq 0$) with boundary $\partial \Sigma \subset \partial\amb$. Assume that 
\begin{itemize} 
\item The sectional curvatures of $\amb $ satisfy $ K_{sect} \leq -1 $,
\item  $\partial \amb $ is umbilic, with umbilicity factor $\alpha \in \r $,
\item $\Sigma$ is a capillary surface of angle $\beta _i \in [0,\pi/2)$ at each connected component $\partial \Sigma _i$ of the boundary $\partial \Sigma$,
\end{itemize}then, 
\begin{equation*}
-2\pi \chi (\Sigma) + \sum_{i=1}^k\frac{\alpha+ \sin\beta _i (H+{\rm max}_{\partial \Sigma}\Phi))}{\cos \beta _i} \abs{\partial \Sigma _i}\geq ( 1- H^2 )\abs{\Sigma} ,
\end{equation*}where $k$ is the number of connected components of $\partial\Sigma$. Moreover, equality holds if, and only if, $\Sigma$ is umbilic, $K_\Sigma =  H^2-1$ and $K_{sect} \equiv -1$ along $\Sigma$.
\end{lemma}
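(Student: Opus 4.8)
The plan is to follow the proof of Lemma \ref{Lem:Area} almost verbatim, reversing the interior curvature inequality and keeping track of the Euler characteristic and of each boundary component separately. First I would invoke the Gauss equation together with the Arithmetic--Geometric Inequality. Since $K_e \leq H^2$ always (the extrinsic curvature is the product of the principal curvatures, whose arithmetic mean is $H$, so $K_e \leq ((\lambda_1+\lambda_2)/2)^2 = H^2$), and now $K_{sect} \leq -1$, I obtain the pointwise bound
\begin{equation*}
K_\Sigma = K_e + K_{sect} \leq H^2 - 1 .
\end{equation*}

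Next I would integrate this over $\Sigma$ and apply the Gauss--Bonnet Formula $2\pi\chi(\Sigma) = \int_\Sigma K_\Sigma + \int_{\partial\Sigma} k_g$, which rearranges to
\begin{equation*}
(1-H^2)\abs{\Sigma} \leq -2\pi\chi(\Sigma) + \int_{\partial\Sigma} k_g .
\end{equation*}
The only new bookkeeping compared with the disk case is that $\chi(\Sigma)$ no longer equals $1$; everything else is the same identity with the inequality flipped, because the integrand is now bounded \emph{above} by $H^2-1$ rather than below.

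The boundary term is handled exactly as in \eqref{Eq2}. On each component $\partial\Sigma_i$, choosing a unit tangent $t$ and using that $\partial\amb$ is umbilic with factor $\alpha$ together with the capillary decomposition $-\eta = \cos\beta_i\,\nu + \sin\beta_i\,N$, I recover
\begin{equation*}
\cos\beta_i\, k_g = \alpha + \sin\beta_i\, II_\Sigma(t,t) \text{ along } \partial\Sigma_i .
\end{equation*}
Since $II_\Sigma(t,t) \leq H + \Phi \leq H + \max_{\partial\Sigma}\Phi$, dividing by $\cos\beta_i > 0$ and integrating over $\partial\Sigma_i$ gives $\int_{\partial\Sigma_i} k_g \leq \frac{\alpha + \sin\beta_i(H+\max_{\partial\Sigma}\Phi)}{\cos\beta_i}\abs{\partial\Sigma_i}$. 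Summing over the $k$ components and substituting into the previous display produces the stated inequality.

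For the equality case, saturation of the area bound forces $\int_\Sigma K_\Sigma = (H^2-1)\abs{\Sigma}$, hence $K_\Sigma \equiv H^2-1$ pointwise. Because $K_e \leq H^2$ and $K_{sect}\leq -1$, this double saturation can only happen if $K_e = H^2$ (equivalently $\Phi \equiv 0$, so $\Sigma$ is umbilic) and $K_{sect} \equiv -1$ along $\Sigma$. Conversely, once $\Sigma$ is umbilic one has $\Phi\equiv 0$, so the boundary estimate $II_\Sigma(t,t)\leq H$ is itself an equality and no slack is introduced there; thus the three listed conditions are exactly equivalent to equality. I do not expect a genuine obstacle: the argument is a sign-reversed transcription of Lemma \ref{Lem:Area}, and the only points demanding care are the orientation of the inequality (the curvature hypothesis now gives an upper bound on $K_\Sigma$) and the replacement of the single disk boundary by a sum over the components $\partial\Sigma_i$, each carrying its own angle $\beta_i$.
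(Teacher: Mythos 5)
Your proposal is correct and follows essentially the same route as the paper: the Gauss equation written as $K_\Sigma = H^2 - \Phi^2 + K_{sect} \leq H^2 - 1$, Gauss--Bonnet, and the boundary identity $\cos\beta_i\, k_g = \alpha + \sin\beta_i\, II_\Sigma(t,t)$ from the proof of Lemma \ref{Lem:Area}, applied component by component. The equality discussion you give (interior saturation forces umbilicity and $K_{sect}\equiv -1$, after which the boundary estimate carries no slack) matches the paper's intent, which it leaves implicit by reference to Lemma \ref{Lem:Area}.
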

\begin{proof}
As we did in Lemma \ref{Lem:Area}, by the Gauss-Bonnet Formula and the hypothesis, we get
\begin{equation*}
\begin{split}
2\pi \chi (\Sigma) &= \int _\Sigma  K_\Sigma + \int _{\partial \Sigma} k_g \\
 &= \int _\Sigma H^2 - \int _\Sigma \Phi ^2 + \int _\Sigma K_{sect} + \sum_{i=1}^k \int _{\partial \Sigma _i} k_g \\
 & \leq (H^2 -1)\abs{\Sigma} + \sum_{i=1}^k\frac{\alpha+ \sin\beta _i (H+{\rm max}_{\partial \Sigma}\Phi))}{\cos \beta _i} \abs{\partial \Sigma _i},
\end{split}
\end{equation*}as claimed.
\end{proof}

The area estimate obtained above is still valid when $H >1$, however it does not give information. We will see here how to construct annulus of constant mean curvature $H>1$ orthogonal to the boundary in the Model 3. We will use the notation we have introduced there. 

Let $\gamma$ be the minimizing geodesic in $\Sigma _{0}$  joining two different boundary components $\beta _{i}$ and $\beta _{j}$. Then, $\gamma $ lifts to a geodesic $\tilde \gamma \subset \tilde \Sigma _{0 } \subset P_{0}$ that minimizes the distance between $\tilde \beta _{i}$ and $\tilde \beta _{j}$.

Let $T _{r} = \set{ q \in \h ^{3} \, : \, d(p, \tilde \tilde \gamma ) =r }$, where $d$ denotes the hyperbolic distance, that is, the points at distance $r$ in $\h ^{3}$. This surface is contained in $\amb$ with constant mean curvature bigger than one, depending on $r$, and orthogonal to the boundary $\partial \amb$.

\subsection{Rigidity}

Now, we will characterize Riemannian manifolds $(\amb , \partial \amb)$ with totally geodesic boundary and sectional curvatures $K_{sect}\leq -1$ assuming the existence of an oriented $H-$surface meeting $\partial \amb$ orthogonally and of greatest area. By Lemma \ref{Lem:Area2}, if $\Sigma \subset \amb$ is an oriented compact $H-$surface, $H^2 \leq 1$, such that $\partial \Sigma \subset \partial \amb$ and meets $\partial \amb$ orthogonally, then 
$$  (1-H^2)\abs{\Sigma} \leq 2\pi \abs{\chi(\Sigma)} ,$$where we are assuming that $\partial \amb$ is totally geodesic in $\amb$. Note that, in the case $\chi(\Sigma) =0$,  Lemma \ref{Lem:Area2} says that if there exists a $H-$surface $\Sigma$, $H^{2}\leq 1$, orthogonal to the boundary with $\chi(\Sigma)=0$ then $\Sigma$ has constant mean curvature $H=1$, it is umbilic, $K_\Sigma = 0$ and $K_{sect} \equiv -1$ along $\Sigma$, without any area information.

It is remarkable that we can not characterize the manifold when $\Sigma$ is minimal. We will explain this in more detail after the proof of the following:

\begin{theorem}\label{Th:Rigidity2}
Let $(\amb,\partial \amb) $ be a complete orientable Riemannian $3-$manifold with boundary. Assume that $\amb$ has sectional curvatures $K_{sect} \leq -1$ and $\partial \amb$ is totally geodesic. Assume that there exists a compact oriented embedded $H-$surface $\Sigma \subset \amb$, $H \in (0, 1]$, orthogonal to $\partial \amb$ with non-positive Euler characteristic. Then:
\begin{itemize}
\item If $H \in (0,1)$, $\Sigma$ separates and $ \abs{\Sigma} = \dfrac{2\pi \abs{\chi(\Sigma)}}{1- H^2}$, $\chi(\Sigma ) < 0$, then there exists a totally geodesic minimal surface $\Sigma _m$ orthogonal to $\partial \amb$ and an isometry 
$$ F: \left( [0,  {\rm arctgh } \, H) \times \Sigma ,  dt^2 + \cosh ^2 \left( t \right)g_{-1} \right) \to \amb ,$$where $g_{-1}$ denotes the standard metric of constant curvature $-1$, such that 
\begin{itemize}
\item $F (0, \Sigma) = \Sigma $ and $F( {\rm arctgh } \, H , \Sigma) = \Sigma _{m}$, and
\item $ F (t, \Sigma ) = \Sigma _{t}$ is an embedded totally umbilic $H-$surface, $H={\rm tgh}( {\rm arctgh } \, H -t)$, orthogonal to $\partial \amb$ for all $t \in [0 ,  {\rm arctgh } \, H)$.
\end{itemize}

Moreover, if $\Sigma _m$ is non-orientable, a tubular neighborhood of $\Sigma _m$ is foliated by its equidistants $\Sigma _t$.



\item If $H=1$ and  $\chi (\Sigma) =0$, the mean convex side of $\amb \setminus \Sigma $, call this component $\mathcal U$, is isometric to $ [0, + \infty) \times \Sigma$ endowed with the product metric $g = dt^2 + e^{-2t}g_{e}$, where $g_e$ is the standard Euclidean metric of curvature $0$. That is, $\mathcal U$ is isometric to a cusp hyperbolic end $\mathcal P$ corresponding to Model 4 and $\Sigma$ is a slice.

\end{itemize}
\end{theorem}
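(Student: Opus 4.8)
The plan is to mirror the proof of Theorem \ref{Th:Rigidity}, replacing the shrinking--to--a--point picture of positive curvature by the equidistant foliation of Model 3 (resp. Model 4). Let $N$ be the unit normal of $\Sigma$ pointing to the mean convex side and flow by normal geodesics, $F_t(p) = {\rm exp}_p(t N(p))$, with $\Sigma_t = F_t(\Sigma)$. Since $\Sigma$ meets the totally geodesic $\partial\amb$ orthogonally, the Gauss Lemma keeps $\partial\Sigma_t \subset \partial\amb$ and $\Sigma_t \perp \partial\amb$ for all $t$, so each $\Sigma_t$ is again free boundary and $\int_{\partial\Sigma_t} k_g^t = 0$. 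I would define $\epsilon_0$ as the largest time on which $F$ stays embedded with $\Sigma_t \cap \partial\amb = \partial\Sigma_t$, pull back the metric as $g = dt^2 + g_t$, and record from the equality case of Lemma \ref{Lem:Area2} that $\Sigma_0 = \Sigma$ is umbilic with $K_\Sigma \equiv H^2-1$ and $K_{sect} \equiv -1$ along it.

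The heart is a monotonicity argument. I would set $G(t) := \int_{\Sigma_t} H_t^2 - \abs{\Sigma_t} + 2\pi\abs{\chi(\Sigma)}$. Combining Gauss--Bonnet, the Gauss equation and $K_{sect}\le -1$ exactly as in Lemma \ref{Lem:Area2} gives $\int_{\Sigma_t}\Phi_t^2 \le G(t)$, so $G\ge 0$; the hypothesis $\abs{\Sigma} = 2\pi\abs{\chi(\Sigma)}/(1-H^2)$ (automatic when $\chi(\Sigma)=0$) yields $G(0)=0$. Differentiating with the first variation of area and the Riccati first variation of the mean curvature, $2\,\partial_t H_t = 2H_t^2 + 2\Phi_t^2 + {\rm Ric}(N_t)$, the boundary terms drop (as $\gamma_p'(t)$ is tangent to $\partial\amb$ and $\eta$ is normal), giving $G'(t) = \int_{\Sigma_t} H_t\big(2\Phi_t^2 + {\rm Ric}(N_t)+2\big)$. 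Since $K_{sect}\le -1$ forces ${\rm Ric}(N_t)+2\le 0$, provided $H_t\ge 0$ this yields $G'(t)\le 2\int_{\Sigma_t} H_t\Phi_t^2 \le 2C\,G(t)$ on each compact subinterval, and Gronwall together with $G(0)=0,\ G\ge 0$ forces $G\equiv 0$. Hence $\Sigma_t$ is umbilic throughout, and $G'\equiv 0$ forces the pointwise alternative $H_t({\rm Ric}(N_t)+2)\equiv 0$, the analogues of items (a.1)--(a.3).

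A subtlety absent in the positive case is that $H_t$ is not automatically nonnegative, since the sign of ${\rm Ric}$ no longer makes it monotone; so I would bootstrap. A Claim--1 type propagation (as in \cite{LMazHRos} and Theorem \ref{Th:Rigidity}, using that each leaf is connected) shows that where $H_t>0$ one has ${\rm Ric}(N_t)\equiv -2$, and then, with $\Phi_t\equiv 0$, the pointwise ODE $\partial_t H_t = H_t^2 - 1$; since the initial value $H$ is constant on $\Sigma$, uniqueness makes $H_t$ spatially constant, equal to ${\rm tgh}({\rm arctgh}\,H - t)$ when $H\in(0,1)$ and to the equilibrium $H_t\equiv 1$ when $H=1$. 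The former stays in $(0,1)$ and decreases to $0$ precisely at $t={\rm arctgh}\,H$, the latter never vanishes, so in both cases $H_t>0$ persists and the bootstrap closes. Integrating the first variation of area gives $\abs{\Sigma_t}=\abs{\Sigma}\cosh^2({\rm arctgh}\,H-t)/\cosh^2({\rm arctgh}\,H)$ (resp. $\abs{\Sigma}e^{-2t}$), while umbilicity, ${\rm Ric}(N_t)\equiv -2$ and Gauss--Bonnet force $K_{sect}\equiv -1$ on the whole flow region: it is hyperbolic, foliated by totally umbilic equidistants, so $g$ is the Fermi metric $dt^2+\cosh^2(\cdot)\,g_{-1}$ (resp. the cusp metric $dt^2+e^{-2t}g_{e}$).

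Finally, for $H\in(0,1)$ the leaves $\Sigma_t$ converge as $t\to{\rm arctgh}\,H$ to an umbilic surface of vanishing mean curvature, hence totally geodesic, $\Sigma_m$ orthogonal to $\partial\amb$, and I would identify the swept region with the model $\mathcal H(t)$ via the Fermi chart of $\Sigma_m$; for $H=1$ the flow runs for all time with flat horospherical leaves and the mean convex side is the cusp $\mathcal P$ of Model 4. The main obstacle is this global step: promoting the local isometry $F$ to a genuine isometry onto the region, and showing the flow stays embedded exactly up to $t={\rm arctgh}\,H$ and no sooner. Here I cannot reuse the positive--curvature argument, which relied on simple connectivity and the refocusing of geodesics to a point; instead I would exploit that in curvature $\le -1$ the normal geodesics of the totally geodesic $\Sigma_m$ have no focal points, so its normal exponential map is a diffeomorphism onto a collar, which makes $F$ injective and pins down $\epsilon_0={\rm arctgh}\,H$. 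When $\Sigma_m$ is non-orientable one passes to the orientation double cover, so that a tubular neighborhood is foliated by the equidistants $\Sigma_t$, which is exactly the stated caveat.
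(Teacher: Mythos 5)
Your monotonicity argument is the same as the paper's: the quantity $G(t)=\int_{\Sigma_t}H_t^2-\abs{\Sigma_t}+2\pi\abs{\chi(\Sigma)}$, the inequality $\int_{\Sigma_t}\Phi_t^2\le G(t)$, the computation of $G'$ via the first variations, the Gronwall step on intervals where $H_t$ stays positive, the resulting ODE $\partial_t H_t=H_t^2-1$ with its bootstrap, and the identification of the leaf geometry all match the paper. The gap is in the global step you yourself flag as the main obstacle. You propose to get injectivity of $F$ on $[0,{\rm arctgh}\,H)\times\Sigma$ from the absence of focal points along normal geodesics of the totally geodesic $\Sigma_m$. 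That does not work: no focal points only makes the normal exponential map an immersion (a local diffeomorphism), not injective. A totally geodesic surface in a hyperbolic manifold can perfectly well have equidistants that collide globally (two points of $\Sigma_m$ joined by a geodesic arc orthogonal to it at both ends), exactly as a simple closed geodesic in a hyperbolic surface has equidistants that self-intersect beyond the collar width. Moreover the argument is circular in order: $\Sigma_m$ is produced as the time-${\rm arctgh}\,H$ leaf of the flow, and you need injectivity of the flow before you can speak of $\Sigma_m$ as an embedded surface with a tubular neighborhood.

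A telltale sign is that your injectivity argument never uses the hypothesis that $\Sigma$ separates $\amb$, which the theorem explicitly assumes for $H\in(0,1)$. The paper's Claim 1 takes $\bar\epsilon$ to be the supremum of injectivity times and splits a first self-intersection into two cases: (1) $F(\bar\epsilon,p)=F(0,q)$, which is excluded because it produces a closed curve meeting $\Sigma$ in exactly one point, contradicting separation; and (2) $F(\bar\epsilon,p)=F(\bar\epsilon,q)$, which is excluded by a tangency/maximum-principle analysis of the two sheets according to whether their mean curvature vectors agree (forcing a nontrivial covering of an embedded leaf, impossible since nearby leaves are graphs in its normal bundle) or are opposite (forcing the two collars $F(\tilde U\setminus U)$ and $F(\tilde V\setminus V)$ to intersect, contradicting injectivity before time $\bar\epsilon$). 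This topological-plus-maximum-principle argument is what you are missing; the focal-point substitute does not deliver it. The rest of your outline (the $H=1$ cusp case following Mazet--Rosenberg, and the double-cover caveat when $\Sigma_m$ is non-orientable) is consistent with the paper.
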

\begin{proof}
As we did in Theorem \ref{Th:Rigidity}, let $N$ be the unit normal pointing to the mean convex side of $\amb \setminus \Sigma$ and consider the variation 
$$ F : [0,\epsilon _0 ) \times \Sigma  \to \amb  ,$$where 
$$ \epsilon _0 := {\rm sup}\set{\epsilon >0 \, : \, \, F : [0,\epsilon)\times \Sigma  \to \amb  \text{ is embedded and } \Sigma _t  \cap \partial \amb = \partial \Sigma _t  } >0. $$

As we observed in Theorem \ref{Th:Rigidity}, $\Sigma _t \subset \amb$, $\Sigma_t \cap \partial \amb = \partial \Sigma _t$ for all $t \in [0 ,\epsilon _0)$, where $\Sigma _t = F_t (\Sigma)$. Let $g = F^* \meta{}{}$ be the pullback metric of $\amb$ into $[0 ,\epsilon _0 ) \times \Sigma $ and, hence, we can write $g = dt^2 +g_t$. Since $\Sigma _0 = \Sigma$ achieves the equality in Lemma \ref{Lem:Area2}, then
\begin{itemize}
\item either $H=1$ and $\chi (\Sigma) =0$,
\item or $\Sigma _0$ is umbilic, $K_\Sigma = H^2 -1 < 0$ and $K_{sect} = -1$ along $\Sigma _0$, in other words, $g_0 := \frac{1}{\sqrt{1-H^2}} g_{-1}$, where $g_{-1}$ is the standard metric of constant sectional curvature $-1$.
\end{itemize}

Now, as we did in Theorem \ref{Th:Rigidity}, we study the behavior of the function $G(t)$ that satisfies
$$\int _{\Sigma _{t}} \Phi _{t}^{2}  \leq  G(t): =  2\pi \abs{ \chi (\Sigma)} - \abs{\Sigma _{t}} + \int _{\Sigma _{t}} H_{t}^{2} \text{ for all } t \in  [0 , \epsilon _0) .  $$

By the First Variation Formula and the First Variation of the Mean Curvature
$$ 2\dfrac{\partial H_t}{\partial t} ={\rm Ric}(N_t) + 2 H^2_t + 2 \Phi _t ^2 ,$$ we obtain
\begin{equation*}
G' (t) = \int _{\Sigma _t} 2 H_t \left( \frac{\partial H_t}{\partial t} - H_t ^2 +1 \right) = \int _{\Sigma _t}  H_t \left( 2 \Phi _t ^2 + {\rm Ric}(N_t) +2 \right)
\end{equation*}for all $t \in [0 , \epsilon _0 )$. Define 
$$ \bar \epsilon  = {\rm sup}\set{ \epsilon \in (0, \epsilon _0) \, :\,  H_{\epsilon}(p) \neq  0 \text{ for all }  p \in \Sigma _{\epsilon}} ,$$then, for any $\epsilon ' \in (0, \bar \epsilon )$, there exists $C \geq 0$ such that  $C \geq H_t (p) >0$ for all $(t,p) \in [0 ,  \epsilon ']\times \Sigma  $ and, therefore
\begin{equation}\label{Gprima}
G' (t) \leq  2 C \int _{\Sigma _t} \Phi _t ^2  \leq 2C \, G(t) \text{ for all } t \in [0 ,  \epsilon '] ,\end{equation}which, by Gronwall's Lemma, implies $ G(t) \leq 0 $ for all $ t \in [0 ,  \epsilon ']$, where we have used that $G(0)=0$ and hence $ G(t) = 0 $ for all $t \in [0,  \epsilon ']$. Hence, for all $t \in [0 ,  \epsilon '] $ we have
\begin{itemize}
\item [(b.1)] $\Sigma _t$ is umbilic,
\item [(b.2)]${\rm Ric}(N_t) + 2 =0$, which implies that the sectional curvature of any two plane orthogonal to $\Sigma$ is equals to $-1$,
\item [(b.3)]$ \dfrac{\partial H_t}{\partial t} = H_t^2 -1 $.
\end{itemize}

Now, we divide the proof into two cases. First, the case $H\in (0,1)$:

\begin{quote}
{\bf Case A: }{\it If $H \in (0,1)$, $\Sigma$ separates and $ \abs{\Sigma} = \dfrac{2\pi \abs{\chi(\Sigma)}}{1- H^2}$, $\chi(\Sigma ) < 0$, then there exists a totally geodesic minimal surface $\Sigma _m$ orthogonal to $\partial \amb$ and an isometry 
$$ F: \left( [0,  {\rm arctgh } \, H) \times \Sigma ,  dt^2 + \cosh ^2 \left( t \right)g_{-1} \right) \to \amb ,$$where $g_{-1}$ denotes the standard metric of constant curvature $-1$, such that 
\begin{itemize}
\item $F (0, \Sigma) = \Sigma $ and $F( {\rm arctgh } \, H , \Sigma) = \Sigma _{m}$, and
\item $ F (t, \Sigma ) = \Sigma _{t}$ is an embedded totally umbilic $H-$surface, $H={\rm tgh}( {\rm arctgh } \, H -t)$, orthogonal to $\partial \amb$ for all $t \in [0 ,  {\rm arctgh } \, H)$.
\end{itemize}

Moreover, if $\Sigma _m$ is non-orientable, a tubular neighborhood of $\Sigma _m$ is foliated by its equidistants $\Sigma _t$.}
\end{quote}
\begin{proof}[Proof of Case A]
On the one hand, by item (b.3) we have 
\begin{equation}\label{HNeg}
H_t (p) =  {\rm tgh}\left(  {\rm arctgh} \, H - t \right) \text{ for all } p \in \Sigma ,
\end{equation}that is, $\Sigma _t$ has constant mean curvature $H_t =  {\rm tgh}\left({\rm arctgh} \, H -t \right)$ for all $t \in  [0 ,  \epsilon ']$.

On the other hand, by the First Variation of the Area, we have 
$$ \dfrac{\partial }{\partial t} \abs{\Sigma _t} = -2 \, {\rm tgh}\left( {\rm arctgh} H -t \right) \abs{\Sigma _t} \text{ for all } t \in [0 ,  \epsilon '] , $$which implies that  
\begin{equation}\label{AreaNeg}
 \abs{\Sigma _t} =  2\pi \abs{\chi(\Sigma)}\cosh^2 \left( {\rm arctg} \, H -t\right) \text{ for all } t \in [0,  \epsilon '] .
\end{equation}

Thus, by the Gauss-Bonnet Formula substituting \eqref{HNeg} and \eqref{AreaNeg}, we have 
\begin{equation*}
2\pi \chi(\Sigma) = \int _{\Sigma _t} K_{\Sigma _t}  \leq (H_t^2 -1) \abs{\Sigma _t} = - 2 \pi \abs{\chi(\Sigma)},
\end{equation*}which tells us that the sectional curvature of the two plane tangent to $\Sigma _t$ is equal to $-1$ for all $t \in [0,  \epsilon '] $, and therefore
$$K_{\Sigma _t} = {\rm tgh}^2 ( {\rm arctgh} \, H -t ) -1= -\cosh^{-2} \left( {\rm arctgh} \, H  -t \right) \text{ for all } t \in [0,  \epsilon '] .$$

Now, we can argue with $\Sigma _{\epsilon '}$ and obtain $\epsilon '<  \epsilon ' _2 \leq \epsilon _0$ such that the above discussion is valid. Therefore, we can continue the process as far $H_t \geq 0$, that is, $\epsilon _0 = {\rm arctgh}\, H$ if $H\in (0,1)$. 

Set $\Sigma _{m} = F_{{\rm arctgh} \,H} (\Sigma ) $, then $\Sigma _{m}$ is a totally geodesic minimal surface such that $\abs{\Sigma _{m}} = 2\pi \abs{\chi (\Sigma)}$. 


Therefore, $ [0,  {\rm arctgh}\, H) \times \Sigma$ with the pullback metric $g = dt^2 + \cosh ^2 \left( t \right)g_{-1}$ is isometric to $\mathcal H ( {\rm arctgh}\, H)$ endowed with the hyperbolic metric given by Model 3. Denote by $I : \mathcal H ( {\rm arctgh}\, H) \to [0,  {\rm arctgh}\, H) \times \Sigma $ this global isometry. So, $\tilde F = F \circ I  :  \mathcal H ( {\rm arctgh}\, H)  \to \amb $ is a local isometry and $\tilde F$ is injective on $\Sigma$.


\begin{quote}
{\bf Claim 1: }{\it $ F $ is injective on $ [0, {\rm arctgh}\, H ) \times \Sigma $.}
\end{quote}
\begin{proof}[Proof of Claim 1]
Define 
$$ \bar \epsilon = {\rm sup}\set{\epsilon \in [0 , {\rm arctgh } \, H) \, : \,  F \text{ is injective on } [0,\epsilon ) \times \Sigma}. $$

Clearly, $\bar \epsilon > 0  $ since $F_{0} (\Sigma) = \Sigma $ is embedded, so injective. If $\bar \epsilon ={\rm arctgh} \, H$, we finish. We now prove F is injective on $ [0, {\rm arctgh}\, H ) \times \Sigma $.  Suppose the contrary.  Then there are distinct points $p,q \in \Sigma $ such that $1$ or $2$ holds:
\begin{enumerate}
\item $F(\bar \epsilon , p) = F(0, p ) \text{ or } F(0, q) $,
\item $F(\bar \epsilon , p) = F(\bar \epsilon , q) $.
\end{enumerate}

In case $1$, we can construct a closed curve $C$ meeting $\Sigma$ in exactly one point, which contradicts that $\Sigma $ separates $\amb$.  To construct $C$ when $F( \bar \epsilon , p) = F (0, p)$, take $C(t) = F(t, p)$, $0 \leq t  \leq  \bar \epsilon $. When $F(\bar \epsilon , p) = F (0, q)$, let $\Gamma (t)$ be a curve joining $q$ to $p$ on $\Sigma$, $0 \leq t  \leq  \bar \epsilon $.  Then define $C(t) = F(t, \Gamma (t))$, $0 \leq  t  \leq  \bar \epsilon $.  Thus case $1$ can not occur.

Consider case $2$, $F( \bar \epsilon ,p) = F(\bar \epsilon ,q)$. Let $U$ and $V$ be disjoint neighborhoods of $p$ and $q$, respectively, in $\set{\bar \epsilon} \times \Sigma $, such that $F(\bar \epsilon ,p)$ is in the intersection of $F(\bar \epsilon,U)$ and $F(\bar \epsilon,V)$. We  distinguish two possibilities here depending on the direction of the mean curvature vectors; $\vec{H}_{F(\bar \epsilon ,U)}$, $\vec{H}_{F(\bar \epsilon ,V)}$. We can assume that $U$ and $V$ are disjoint neighborhoods of $(\bar \epsilon , p)$ and $( \bar \epsilon  , q)$ on each of which $F$ is injective. For future use we extend $U$ and $V$ to neighborhoods $\tilde U $ and $ \tilde V$  in $ [0, \bar \epsilon ] \times \Sigma $ on each of which $F$ is injective.   

Suppose first that  $\vec{H}_{F(\bar \epsilon ,p)} =\vec{H}_{F(\bar \epsilon , q)}$. Then $F(\bar \epsilon , U)$ is on one side of $F(\bar \epsilon ,V)$ at $F(p,c)$ (where they are tangent) and they have the same mean curvature vector so $F(\bar \epsilon , U) = F( \bar \epsilon , V)$. Consequently, $\Sigma$ is a non trivial covering space of the embedded surface $F( \bar \epsilon , \Sigma)$.  But, $F(\tilde \epsilon , \Sigma)$, $\tilde \epsilon$ near $\bar \epsilon $, is a graph over $F(\bar \epsilon , \Sigma)$ in the trivial normal bundle of $F(\bar \epsilon , \Sigma)$ in $\amb$.  This is impossible when the covering space is non trivial.

Now suppose that $\vec{H}_{F(\bar \epsilon ,p)} = - \vec{H}_{F(\bar \epsilon , q)}$.  At $F(\bar \epsilon ,p)$, $F(\bar \epsilon , U)$ is strictly mean convex towards $\vec{H}_{F(\bar \epsilon , p)}$  and $F( \bar \epsilon , V)$ is strictly mean convex towards $-\vec{H}_{F(\bar \epsilon ,q)} $. $F(\tilde U)$ is on the mean concave side of $F( \bar \epsilon , U) $ near $F(\bar \epsilon , p) $. $F(\tilde V) $ is on the mean concave side of $F( \bar \epsilon ,V)$ near $F( \bar \epsilon ,p)$. So, any neighborhood of $F(\bar \epsilon , p)$ contains an open set on the mean concave side of $F(\bar \epsilon ,U)$ which is contained in the mean concave side of $F(\bar \epsilon ,V) $. Hence $F(\tilde U \setminus U)$ intersects $F(\tilde V \setminus V)$ which contradicts $F$ injective on $[0,\bar \epsilon) \times \Sigma $.

\end{proof}


We now know that $F$ is injective on $[0,{\rm arctgh} \, H) \times \Sigma $. The surface $\set{ {\rm arctgh} \, H }  \times \Sigma $ is a minimal surface so it may be the case that $\set{ {\rm arctgh} \, H } \times \Sigma $ is a two sheeted covering space of the non-orientable embedded surface
$F(\set{ {\rm arctgh} \, H } , \Sigma) := \Sigma _m $. Hence, a tubular neighborhood of $\Sigma _m$ foliated by its equidistants.

This finishes the proof of Claim A.
\end{proof}

Second, and last, when $H=1$:

\begin{quote}
{\bf Case B:} {\it If $H=1$, the mean convex side of $\amb \setminus \Sigma $ is isometric to  $ [0, + \infty) \times \Sigma$ endowed with the product metric $g = dt^2 + e^{-2t}g_{e}$.}
\end{quote}
\begin{proof}[Proof of Case B]
We follow \cite[Theorem 2]{LMazHRos}. In this case, the mean curvature of each $\Sigma _t$ is constant and it is equal to one, that is, $H_t = 1$ for all $t \in [ 0, \bar \epsilon )$. Thus, we can let $\bar \epsilon$ tend to $\epsilon _0$ and we get that $\Sigma _t$ has constant mean curvature $1$, it is umbilic, $K_\Sigma =0$ for all $t \in [0, \epsilon _0 )$ and, moreover, $[0, \epsilon _0 ) \times \Sigma$ endowed with the pullback metric $g$ has constant sectional curvature equals to $-1$ for any two plane. Also, one can see that $g = dt^2 + e^{-2t} g_e$, $g_e$ the standard Euclidean metric, and $\epsilon _0 = + \infty$. In this case, one can prove that, following \cite[Theorem 2]{LMazHRos}, the mean convex side of $\amb \setminus \Sigma $ is, in fact, isometric to $ [0, + \infty) \times \Sigma$ endowed with the product metric $g = dt^2 + e^{-2t}g_{e}$. 
\end{proof}

This concludes the proof of Theorem  \ref{Th:Rigidity2}.
\end{proof}

We remark that in the minimal case, i.e., $H=0$, by the First Variation Formula of the Mean curvature we obtain that $\abs{H_{t}(p)} >0$ for all $(t,p) \in [0,\epsilon ']$. Nevertheless, the mean curvature vector $\vec{H}(p)$ points in the opposite direction of the normal given by the flow $\frac{d}{dt}F_{t}(p)$. Hence, by our definition of the mean curvature function, $H_{t}$ is negative for $N_{t}$. In other words, in the minimal case, $N_{t} = \frac{d}{dt}F_{t}(p)$ and hence, \eqref{Gprima} is not satisfied. Recall that, in order to obtain \eqref{Gprima}, we needed that $H_t $ was positive for all $t$.

\subsection{Area estimate and rigidity of compact $H-$surfaces without boundary}

One can easily see that results stated above for compact $H-$surfaces with boundary can be extended to closed surfaces. First, the area estimate

\begin{lemma}\label{Lem:AreaCompact}
Let $\amb $ be a complete orientable Riemannian $3-$manifold with sectional curvatures $ K_{sect} \leq -1 $. Let $\Sigma \subset \amb$ be a closed oriented $H-$surface, $H \in [0,1)$, of genus $g(\Sigma ) \geq 2$. Then, 
\begin{equation*}
4\pi (g(\Sigma) -1) \geq ( 1- H^2 )\abs{\Sigma} .
\end{equation*}

Moreover, equality holds if, and only if, $\Sigma$ is umbilic, $K_\Sigma =  H^2-1$ and $K_{sect} \equiv -1$ along $\Sigma$.
\end{lemma}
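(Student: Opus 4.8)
The plan is to treat Lemma \ref{Lem:AreaCompact} as the closed-surface analogue of Lemma \ref{Lem:Area2}, obtained by simply deleting the boundary terms. First I would invoke the Gauss equation exactly as in the proof of Lemma \ref{Lem:Area2}: writing $K_e = H^2 - \Phi^2$ (since $\Phi = \sqrt{H^2 - K_e}$), the Gauss equation gives $K_\Sigma = K_e + K_{sect} = H^2 - \Phi^2 + K_{sect}$. Using the hypothesis $K_{sect} \leq -1$ and $\Phi^2 \geq 0$, this yields the pointwise bound $K_\Sigma \leq H^2 - 1$ along $\Sigma$.

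Next I would integrate this inequality over the closed surface $\Sigma$ and apply the Gauss--Bonnet Theorem. Since $\Sigma$ is closed (no boundary, so no geodesic curvature term to carry), we have $\int_\Sigma K_\Sigma = 2\pi \chi(\Sigma)$, and for an orientable closed surface of genus $g(\Sigma)$ one has $\chi(\Sigma) = 2 - 2g(\Sigma)$. Combining:
\begin{equation*}
2\pi(2 - 2g(\Sigma)) = \int_\Sigma K_\Sigma = \int_\Sigma H^2 - \int_\Sigma \Phi^2 + \int_\Sigma K_{sect} \leq (H^2 - 1)\abs{\Sigma}.
\end{equation*}
Rearranging, $-2\pi(2 - 2g(\Sigma)) \geq (1 - H^2)\abs{\Sigma}$, i.e. $4\pi(g(\Sigma) - 1) \geq (1 - H^2)\abs{\Sigma}$, which is exactly the claimed estimate. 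Here the genus-at-least-two hypothesis guarantees $\chi(\Sigma) < 0$ so the left-hand side is positive, consistent with $1 - H^2 > 0$ when $H \in [0,1)$.

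For the equality characterization I would trace back where the two inequalities above can be tight. Equality forces both $\int_\Sigma \Phi^2 = 0$ and $\int_\Sigma (K_{sect} + 1) = 0$ simultaneously. Since $\Phi^2 \geq 0$ pointwise, the first gives $\Phi \equiv 0$, i.e. $\Sigma$ is umbilic; since $K_{sect} + 1 \geq 0$ pointwise by hypothesis (the sectional curvature of the tangent plane to $\Sigma$ satisfies $K_{sect} \leq -1$), the second gives $K_{sect} \equiv -1$ along $\Sigma$. Feeding these back into the Gauss equation yields $K_\Sigma = H^2 - 1$. The converse is immediate by substituting these conditions into Gauss--Bonnet. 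I do not anticipate a genuine obstacle here: the argument is a direct simplification of Lemma \ref{Lem:Area2}, and the only point requiring mild care is bookkeeping the sign conventions (that $1 - H^2 > 0$ and $\chi(\Sigma) < 0$ align correctly) and confirming that the pointwise inequalities are equalities only on the stated geometric locus.
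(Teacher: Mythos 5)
Your proposal is correct and is exactly the argument the paper intends: it presents Lemma \ref{Lem:AreaCompact} as the boundaryless specialization of Lemma \ref{Lem:Area2}, and your Gauss-equation plus Gauss--Bonnet computation with $\chi(\Sigma)=2-2g(\Sigma)$ is that proof. The only blemish is a sign typo in the equality discussion: under the hypothesis $K_{sect}\leq -1$ one has $K_{sect}+1\leq 0$ pointwise (not $\geq 0$), and it is precisely this fixed sign together with $\int_\Sigma (K_{sect}+1)=0$ that forces $K_{sect}\equiv -1$.
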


In the case we get the equality in the area estimate above, we can classify the three-manifold
\begin{theorem}\label{Th:RigidityCompact}
Let $\amb $ be a complete orientable Riemannian $3-$manifold with sectional curvatures $K_{sect} \leq -1$. Assume that there exists a compact oriented embedded $H-$surface $\Sigma \subset \amb$, $H \in (0, 1)$, $\Sigma$ separates $\amb$, with negative Euler characteristic such that 
$$ \abs{\Sigma} = \dfrac{2\pi \abs{\chi(\Sigma)}}{1- H^2} ,$$then there exists a  totally geodesic minimal surface $\Sigma _m$ and an isometry 
$$ F: \left( [0,  {\rm arctgh } \, H ) \times \Sigma ,  dt^2 + \cosh ^2 \left( t \right)g_{-1} \right) \to \amb ,$$where $g_{-1}$ denotes the standard metric of constant curvature $-1$, such that 
\begin{itemize}
\item $F (0, \Sigma) = \Sigma $ and $F( {\rm arctgh } \, H , \Sigma) = \Sigma _{m}$, and
\item $ F (t, \Sigma ) = \Sigma _{t}$ is an embedded totally umbilic $H-$surface, $H={\rm tgh}( {\rm arctgh } \, H -t)$, for all $t \in [0 ,  {\rm arctgh } \, H)$.
\end{itemize}

Moreover, if $\Sigma _m$ is non-orientable, a tubular neighborhood of $\Sigma _m$ is foliated by its equidistants $\Sigma _t$.
\end{theorem}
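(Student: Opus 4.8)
The plan is to run the equidistant--flow argument of Theorem \ref{Th:Rigidity2}, Case A, essentially verbatim, the only simplification being that $\Sigma$ is now closed, so every boundary integral in the Gauss--Bonnet formula vanishes and there is no $\partial \amb$ to track. First I would fix the unit normal $N$ pointing to the mean convex side of $\amb \setminus \Sigma$ (the separation hypothesis is exactly what makes this side well defined) and consider the normal exponential flow $F(t,p) = {\rm exp}_p(tN(p))$ on $[0,\epsilon_0)\times\Sigma$, where $\epsilon_0$ is the supremum of those $\epsilon$ for which $F$ is an embedding. Pulling back the metric gives $g = dt^2 + g_t$, and the equality hypothesis together with Lemma \ref{Lem:AreaCompact} forces $\Sigma_0 = \Sigma$ to be umbilic with $K_\Sigma = H^2-1$ and $K_{sect}\equiv -1$ along $\Sigma$.

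Next I would introduce
$$ G(t) := 2\pi\abs{\chi(\Sigma)} - \abs{\Sigma_t} + \int_{\Sigma_t} H_t^2 , $$
and observe that Gauss--Bonnet, now with no boundary term since each $\Sigma_t$ is closed, gives $\int_{\Sigma_t}\Phi_t^2 \leq G(t)$. Using the first variation of area and of the mean curvature as in \eqref{Gprima}, together with the fact that $K_{sect}\leq -1$ yields ${\rm Ric}(N_t)+2\leq 0$, I would obtain $G'(t)\leq 2C\,G(t)$ on any compact subinterval on which $H_t$ stays positive. Since $G(0)=0$ and $G\geq 0$, Gronwall's Lemma forces $G\equiv 0$, whence on that range $\Sigma_t$ is umbilic, every plane orthogonal to $\Sigma_t$ has curvature $-1$, and $\partial_t H_t = H_t^2-1$.

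Integrating the ODE gives $H_t = {\rm tgh}({\rm arctgh}\,H - t)$, as in \eqref{HNeg}; the first variation of area then yields $\abs{\Sigma_t} = 2\pi\abs{\chi(\Sigma)}\cosh^2({\rm arctgh}\,H - t)$ as in \eqref{AreaNeg}, and a second application of Gauss--Bonnet pins down the tangential curvature $K_{\Sigma_t} = H_t^2-1$. Hence the whole pullback metric has constant curvature $-1$ and, after the obvious reparametrization, equals $dt^2 + \cosh^2(t)\,g_{-1}$. Bootstrapping the argument from $\Sigma_{\epsilon'}$ shows the flow persists precisely until $H_t=0$, that is $\epsilon_0 = {\rm arctgh}\,H$, at which point $\Sigma_m := F_{{\rm arctgh}\,H}(\Sigma)$ is a totally geodesic minimal surface with $\abs{\Sigma_m} = 2\pi\abs{\chi(\Sigma)}$, and $F$ is a local isometry onto its image.

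The main obstacle, exactly as in Theorem \ref{Th:Rigidity2}, is upgrading this local isometry to global injectivity of $F$ on $[0,{\rm arctgh}\,H)\times\Sigma$. I would reuse the separation argument of Claim 1 there: a failure of injectivity produces either a closed loop meeting $\Sigma$ in a single point, contradicting that $\Sigma$ separates $\amb$, or a tangency of two flowed sheets at which the maximum-principle dichotomy on the two mean curvature vectors applies, the equal case being ruled out by a nontrivial-covering obstruction and the opposite case by a mean-concave overlap. The only genuinely new feature compared with the bounded case is that $\set{{\rm arctgh}\,H}\times\Sigma$ may double-cover a non-orientable embedded minimal surface $\Sigma_m$, in which case a tubular neighborhood of $\Sigma_m$ is foliated by the equidistants $\Sigma_t$; this is precisely the orientability caveat in the statement and requires no work beyond the injectivity analysis.
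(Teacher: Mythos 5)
Your proposal is correct and follows exactly the route the paper intends: the paper's own ``proof'' of Theorem \ref{Th:RigidityCompact} consists of the single remark that it is completely analogous to Theorem \ref{Th:Rigidity2}, and your write-up is precisely that adaptation, with the boundary terms dropped, the same function $G$, the same Gronwall/ODE analysis, the same separation-based injectivity claim, and the same non-orientable double-cover caveat for $\Sigma_m$. Nothing further is needed.
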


The proof of the above result is completely analogous to Theorem \ref{Th:Rigidity2}.

\section*{Acknowledgments}

The first author, Jos\'{e} M. Espinar, is partially supported by Spanish MEC-FEDER Grant
MTM2013-43970-P; CNPq-Brazil Grants 405732/2013-9 and 14/2012 - Universal, Grant
302669/2011-6 - Produtividade; FAPERJ Grant 25/2014 - Jovem Cientista de Nosso Estado.

\end{document}